\newtheorem{lemma}{Lemma}
\newtheorem{theorem}{Theorem}
\newtheorem{definition}{Definition}
\newtheorem{example}{Example}
\newtheorem{remark}{Remark}
\newtheorem{proposition}{Proposition}
\newtheorem{corollary}{Corollary} 
\newenvironment{proof}{\begin{trivlist}
\item[\hspace{\labelsep}{\bf\noindent Proof. }]}
{
\end{trivlist}
}
\title{\bf Extension of the past lifetime \\ 
and its connection to the cumulative entropy} 
\author{
{\sc Antonio Di Crescenzo}\footnote{
Dipartimento di Matematica, Universit\`a degli Studi di Salerno,
Via Giovanni Paolo II n.\ 132, I-84084 Fisciano (SA), Italy, 
Email: adicrescenzo@unisa.it 
} 
, \ 
{\sc Abdolsaeed Toomaj}\footnote{
Department of Statistics, Gonbad Kavous University,
Gonbad Kavous, Iran, 
Email: ab.toomaj@gmail.com
} 
}
\date{\normalsize 
\bf First published in {\em Journal of Applied Probability}  \\
Vol.\ 52, p.\ 1156--1174  \ \copyright\ 2015 by Applied Probability Trust}
\begin{document}
\maketitle

\begin{abstract}
Given two absolutely continuous nonnegative independent random variables, we define the reversed relevation 
transform as dual to the relevation transform. We first apply such transforms to the lifetimes of the components 
of parallel and series systems under suitably proportionality assumptions on the hazards rates. Furthermore, we 
prove that the (reversed) relevation transform is commutative if and only if the proportional (reversed) hazard 
rate model holds. By repeated application of the reversed relevation transform we construct a decreasing 
sequence of random variables which leads to new weighted probability densities. We obtain various relations 
involving ageing notions and stochastic orders. We also exploit the connection of such a sequence to the 
cumulative entropy and to an operator that is dual to the Dickson--Hipp operator. Iterative formulae for computing 
the mean and the cumulative entropy of the random variables of the sequence are finally investigated. 

\smallskip\noindent
{\em Keywords:\/} 
Relevation transform;
reversed relevation transform;
proportional hazards rate model; 
proportional reversed hazards rate model; 
weighted cumulative distribution; 
cumulative entropy. 

\smallskip\noindent
2010 Mathematics Subject Classification: {60E15, 62B10, 62N05, 94A17} 

\end{abstract}

\section{Introduction}\label{intro}
In this paper we introduce the reversed relevation transform and study some properties of a new weighted cumulative distribution function and its connection with the cumulative entropy. 
The considered stochastic model is dual to the weighted tail distribution studied by Kapodistria and Psarrakos \cite{Kapo-Psarr}. Specifically, we construct a sequence of stochastically decreasing random variables 
$\{X_n,n\geq1\}$. In this sequence, $X_1$ is  nonnegative and absolutely continuous, and the $(n+1)$th random variable of the sequence  is inductively defined  through the following relation: 
$[X_{n+1}\,|\,X_n=t]\stackrel{D}{=}[X_n\,|\,X_n\leq t],\ n=1,2,\cdots,$ for $t>0$. Here, as usual, $[X\,|\,B]$ denotes a random variable having the same distribution of $X$ conditional on $B$, and $\stackrel{D}{=}$ denotes equality in distribution. 
\par
Roughly speaking, $\{X_n,n\geq1\}$ is suitable to describe an iterative process involving a sequence of tasks, where 
$X_n$ is the random time required to perform the $n$th task. For instance, we refer to a training procedure based on iterative learning or a working system based on replacements or repairs of failed items. 
Starting from the sequence $\{X_n,n\geq1\}$, we derive some properties of a new weighted cumulative distribution via stochastic orders, and its connection with covariance and cumulative entropy. 
Our investigation is also devoted to disclosing iterative rules that allow us to compute the mean and the cumulative entropy of the random variables of the sequence, whose computational efficiency is illustrated by some numerical examples. 
\par
We recall that   \cite{Kapo-Psarr} constructed a stochastically increasing sequence of random variables, whose iterative rule involved the residual lifetime of  $X_n,$ i.e.
$$
 [X_{n+1}-X_{n}\,|\,X_n=t]\stackrel{d}{=}[X_{n}-t\,|\,X_n>t],\qquad n\geq 1,\ t>0. 
$$
They obtained some results  on this sequence and its connections to the cumulative residual entropy. 
Their process may 
describe the successive failures of a component, which, on failure, is replaced by a component of equal age, but the lifetime distribution of the $n$th component is assumed to be identical to the distribution of the time until the $n$th failure; see \cite{Kapo-Psarr} for more details. A simpler case was studied by Baxter \cite{Baxter}, who considered a stochastic process generated by the successive failures of a component which on failure is replaced by a 
component of equal age.
\par
The paper is organized as follows. In Section 2 we present the reversed relevation transform with a 
preliminary result based on the usual stochastic order and an application to parallel systems involving the 
proportional reversed hazards rate model (PRHRM). Some dual results for the relevation transform are then provided.  
We also address the problem of determining necessary and sufficient conditions such that the 
reversed relevation transform and the relevation transform are commutative. 
The new weighted distributions and their characteristics based on stochastic orders and ageing properties 
are discussed in Section 3, where the new notion of decreasing reversed hazard rate (DRHR) in a length-biased sense is also considered. In Section 4 various kinds of entropy such as Shannon entropy, cumulative entropy and dynamic cumulative entropy are examined. Then the connections between the earlier mentioned entropies and several functions of the given sequence are discussed. 
Specifically, we also obtain some iterative results for the involved quantities. These include a new probabilistic meaning for the cumulative entropy, which can be expressed as a difference of means of consecutive random variables of the  considered sequence. 
Finally, in Section 5 we define an integral operator and we discuss its properties related to the previous results. Also, various numerical examples are presented to shed further light on the characteristics of the studied sequence.
\section{Background and preliminary results} 
Let $X$ be an absolutely continuous nonnegative random variable with probability density function (PDF) $f(t)$, cumulative distribution function (CDF) $F(t)=\mathbb{P}(X\leq t)$, and survival function $\bar{F}(t)=1-F(t)$, so that $X$ may be viewed as the random lifetime of a system or a component or a living organism. Assume that $F(t)>0$ for all $t>0$. 
We recall that the reversed hazard rate function of $X$ is defined by
$$
\tau(t)=\frac{d}{dt}\log F(t)=\frac{f(t)}{F(t)},\qquad t>0.
$$
There are several papers on applications of reversed hazard rate function in the literature, see e.g.\ Block \emph{et al.}\ \cite{Block}, Di Crescenzo \cite{Di-Crescenzo-2000}, Gupta and Nanda \cite{Gupta-Nanda-2001}, Kijima and Ohnishi \cite{Kijima-Ohnishi-1999}, and the references therein. For the random lifetime $X$, we define $X_{[t]}=[t-X|X\leq t],\ t>0$, 
which is termed the inactivity time. Indeed, $X_{[t]}$ describes the length of the time interval occurring between the failure time $X$ and an inspection time $t$, given that at time $t$ the system has been found failed. For $t>0$, the mean inactivity time of $X$ is given by
\begin{eqnarray}\label{def:mean:residual}
\tilde{\mu}(t)=\mathbb E[t-X\,|\,X\leq t]=\frac{1}{F(t)}\int_{0}^{t}F(x)dx.
\end{eqnarray}
It is known that the CDF of the past lifetime $[X\,|\,X\leq t],\ t>0,$ is given by
$$
P(X\leq x|X\leq t)=\left\{
\begin{array}{lcl}
\displaystyle\frac{F(x)}{F(t)} \ &~~& \ 0\leq x\leq t\\
1 \ &~~~~& \ x>t,
\end{array}
\right.
$$
so that the PDF of the past lifetime is $f(x)/F(t)$ for all $0<x<t$.
\par
Hereafter, we consider two nonnegative absolutely continuous and independent random 
variables $X$ and $Y$ with the CDFs $F(t)$ and $G(t)$, respectively.
\begin{definition}\label{def:2.1}\rm
The reversed relevation transform of $X$ and $Y$ is defined by:
\begin{eqnarray}\label{G:symbol:F}
G\widetilde{\#}F(x)&=&
\int_{0}^{\infty}\bigg\{\frac{F(x)}{F(t)}{\bf 1}_{\{0\leq x\leq t\}}+{\bf 1}_{\{x>t\}}\bigg\}{\rm d}G(t)\nonumber\\
&=&G(x)+F(x)\int_{x}^{\infty}\frac{1}{F(t)}{\rm d}G(t),
\qquad x>0,
\end{eqnarray}
where ${\bf 1}_A$ is the indicator function of the set $A$, i.e.\ ${\bf 1}_A(x)=1$ if $x\in A$, and ${\bf 1}_A(x)=0$ if $x\in A^{c}$.
\end{definition}
\par
Generally, the inactivity time of $X$ at a random time $Y$, denoted by $X_{[Y]}$, 
is defined as $X_{[Y]}\stackrel{D}{=}[Y-X\,|\,X\leq Y]$ (see, e.g.\ \cite{Kapo-Psarr}). 
Moreover, let $X[Y]\stackrel{D}{=}[X\,|\,X\leq Y]$ denote the total time of $X$ given that it is less than an independent 
random inspection time $Y$. Therefore, the CDF of $X[Y]$ is given by  
\begin{eqnarray}\label{X[Y]}
\mathbb P(X[Y]\leq x)=G\widetilde{\#}F(x),
\end{eqnarray}
where the symbol $\widetilde{\#}$ is defined in   (\ref{G:symbol:F}). 
If random variables $X$ and $Y$ are independent and identically distributed (i.i.d.), then  
$$
 \mathbb P(X[Y]\leq x)=F\widetilde{\#}F(x)=F(x)[1+T(x)],\qquad x>0,
$$
where
\begin{equation}\label{cumulative:reversed:hazard}
T(x)=-\log F(x)=\int_{x}^{\infty}\tau(u){\rm d}u,\qquad x>0,
\end{equation}
denotes the cumulative reversed hazard rate function of $X$; see, e.g.\  \cite{Di-Crescenzo-2000}.
\begin{example}\label{eq:FGab}\rm
(a) Let $X$ and $Y$ be independent nonnegative random variables having the cdfs $F(x)=\exp(-ax^{-\gamma}),\ x>0$, 
and $G(x)=\exp(-cx^{-\gamma}),\ x>0$, respectively, with $a,c, \gamma>0$. From (\ref{G:symbol:F}) and (\ref{X[Y]}), 
we have
$$
\mathbb P(X[Y]\leq x)=G\widetilde{\#}F(x)=\left\{
\begin{array}{lcl}
\displaystyle \frac{1}{c-a}\big(c \exp(-ax^{-\gamma})-a \exp(-cx^{-\gamma})\big), \ &~~& \ x>0,\  a\neq c,\\[0.3cm]
\big(1+cx^{-\gamma}\big)\exp(-cx^{-\gamma}), \ &~~& \ x>0,\  a=c.
\end{array}
\right.
$$
(b) If $F(x)={\rm e}^{-{a}/({\rm e}^x-1)},\ x>0$, and $G(x)={\rm e}^{- {c}/({\rm e}^x-1)},\ x>0$, with $a,c>0$, then
$$
\mathbb P(X[Y]\leq x)=G\widetilde{\#}F(x)=\left\{
\begin{array}{lcl}
\displaystyle\frac{1}{c-a}\big(c{\rm e}^{- {a}/({{\rm e}^x-1})}-a{\rm e}^{-{c}/({{\rm e}^x-1})}\big), \ &~~& \ x>0,\  a\neq c,\\[0.3cm]
\Big(1+\displaystyle\frac{c}{{\rm e}^x-1}\Big){\rm e}^{-{c}/({\rm e}^x-1)}, \ &~~& \ x>0,\  a=c.
\end{array}
\right.
$$
\end{example}
\par
Ageing notions and stochastic orders have many applications in various areas of science such as reliability and survival analysis, economics, insurance, actuarial and management sciences, and coding theory; see Shaked and Shanthikumar \cite{Shaked} for a detailed account. In the following, we review some notions that are used in the sequel. Note that here and throughout this paper, the terms `increasing' and `decreasing' are used in a nonstrict sense, and $\mathbb R$ denotes the set of real numbers.
\begin{definition}\label{def:2.2}\rm
If $X$ is an absolutely continuous random variable with support $(l_X,u_X)$, CDF $F$, PDF $f$ and reversed hazard rate function $\tau(t)=\displaystyle{f(t)}/F(t)$, then 
\begin{itemize}
  \item $X$ is said to have the increasing likelihood ratio (ILR) property if $f(x)$ is log-concave or, equivalently, the function $f'(x)/f(x)$ is decreasing in $x\in(l_X,u_X)$;
	\item $X$ is said to have the decreasing likelihood ratio (DLR) property if $f(x)$ is log-convex or, equivalently, the function $f'(x)/f(x)$ is increasing in $x\in(l_X,u_X)$;
  \item $X$ has the DRHR if $\tau(t)$ is decreasing in $t\in (l_X,u_X)$ or, equivalently, $T(x)=-\log F(x)$ is convex.
\end{itemize}
Moreover, if $Y$ is an absolutely continuous random variable with support $(l_Y,u_Y)$, CDF $G$ and PDF $g$, then 
\begin{itemize}
  \item $X$ is smaller than $Y$ in the usual stochastic order (denoted by $X\leq_{\rm st}Y$) 
  if $\bar{F}(t)\leq\bar{G}(t)$, for all $t\in \mathbb{R}$, or equivalently $F(t)\geq G(t)$, for all $t\in \mathbb{R}$; 
\item $X$ is smaller than $Y$ in the likelihood ratio order (denoted by $X\leq_{\rm lr}Y$) if $f(x)g(y)\geq f(y)g(x)$ for all  
$x\leq y$, with $x,y\in(l_X,u_X)\cup(l_Y,u_Y)$;
\item $X$ is smaller than $Y$ in the up-shifted likelihood ratio order (denoted by $X\leq_{\rm lr\uparrow}Y$) 
if $X-x\leq_{\rm lr}Y$ for all $x\geq0$ or, equivalently, for each $x\geq0$ we have $g(t)/f(t+x)$ is increasing 
in $t\in(l_X-x,u_X-x)\cup(l_Y,u_Y)$, 
where $a/0$ is taken to be equal to $\infty$ whenever $a > 0$.
\end{itemize}
\end{definition}
\begin{proposition}\label{pr:1}  
If $X$ and $Y$ are independent nonnegative random variables,  then 
$$ 
 X[Y]\leq_{\rm st}\min\{X,Y\}.
$$ 
\end{proposition}
\begin{proof}
Due to (\ref{G:symbol:F}) and (\ref{X[Y]}), for $x>0$, we have 
\begin{eqnarray*} 
P(X[Y]\leq x)&=& G(x)+F(x)\int_{x}^{\infty}\frac{1}{F(t)}{\rm d}G(t) 
\nonumber \\
&\geq & G(x)+F(x)\int_{x}^{\infty} {\rm d}G(t) 
\nonumber \\
&=& G(x)+F(x)-F(x)G(x).
\end{eqnarray*}
The proof thus follows recalling that the last term is the CDF of $\min\{X,Y\}$.  
\end{proof}
\par
Let us now consider a stochastic model that extends both cases treated in Example \ref{eq:FGab}. 
Let $X$ and $Y$ be  absolutely continuous nonnegative random variables with CDFs $F(x)$ and $G(x)$, 
and reversed hazard rate functions $\tau_X(x)$ and $\tau_Y(x)$, respectively. These variables satisfy the 
PRHRM with proportionality constant $\theta>0$ if $\tau_Y(x)=\theta\tau_X(x)$ 
for all $x>0$ or, equivalently, if 
\begin{equation}\label{reversed:H:R:Model}
G(x)=[F(x)]^\theta,\qquad  x>0.
\end{equation}
The parent distribution function can be expressed as $F(x)=e^{-T(x)},\ x>0,$ where $T(x)$ is defined in (\ref{cumulative:reversed:hazard}). The model (\ref{reversed:H:R:Model}) was first proposed by Lehman \cite{Lehman} in contrast to the proportional hazard rate model. It is more flexible to accommodate both monotonic as well as nonmonotonic failure rates even though the baseline failure rate is monotonic. For more details on the applications and properties of the proportional hazard rate model see e.g.\ \cite{Di-Crescenzo-2000},  \cite{Gupta-1998},  \cite{Gupta-1997},  
\cite{Mudholkar-Hotson} and \cite{Mudholkar-1995}, among others.
\begin{proposition}\label{prop:reversed}
Under the PRHRM (\ref{reversed:H:R:Model}), we have 
\begin{eqnarray}\label{eq:222}
G\widetilde{\#}F(x)=\frac{1}{\theta-1}\left(\theta {\rm e}^{-T(x)}-{\rm e}^{-\theta T(x)}\right),
\qquad x>0,
\end{eqnarray}
for $\theta>0$,  $\theta\neq 1$.
\end{proposition}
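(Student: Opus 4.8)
The plan is to substitute the proportional reversed hazards relation $G(x)=[F(x)]^\theta$ directly into the second (integral) form of the reversed relevation transform in (\ref{G:symbol:F}) and then reduce the resulting expression to an elementary power integral. Since $G(x)=[F(x)]^\theta$, differentiating gives $dG(t)=\theta[F(t)]^{\theta-1}f(t)\,dt$. Hence the integrand appearing in (\ref{G:symbol:F}) simplifies, because the factor $1/F(t)$ cancels one power of $F(t)$: one finds $\frac{1}{F(t)}\,dG(t)=\theta[F(t)]^{\theta-2}f(t)\,dt$. This is the main computational reduction, and it is where the structure of the model does all the work.

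The next step is the change of variable $u=F(t)$, which is legitimate because $F$ is an absolutely continuous cdf with $F(t)>0$ for $t>0$; then $du=f(t)\,dt$, and as $t$ ranges over $(x,\infty)$ the variable $u$ ranges over $(F(x),1)$. This converts the integral into
\[
\int_{x}^{\infty}\frac{1}{F(t)}\,dG(t)=\theta\int_{F(x)}^{1}u^{\theta-2}\,du
=\frac{\theta}{\theta-1}\bigl(1-[F(x)]^{\theta-1}\bigr),
\]
where the assumption $\theta\neq1$ is exactly what guarantees the antiderivative $u^{\theta-1}/(\theta-1)$; the excluded case $\theta=1$ corresponds to the identically distributed situation already recorded after (\ref{X[Y]}).

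Finally I would reinsert this into (\ref{G:symbol:F}) together with $G(x)=[F(x)]^\theta$, writing $F=F(x)$ for brevity, to obtain $G\widetilde{\#}F(x)=F^{\theta}+\frac{\theta}{\theta-1}F-\frac{\theta}{\theta-1}F^{\theta}$. Collecting the two powers of $F$ and using $1-\frac{\theta}{\theta-1}=-\frac{1}{\theta-1}$ yields $G\widetilde{\#}F(x)=\frac{1}{\theta-1}\bigl(\theta F-F^{\theta}\bigr)$. Substituting $F(x)=e^{-T(x)}$ from (\ref{cumulative:reversed:hazard}), so that $[F(x)]^{\theta}=e^{-\theta T(x)}$, gives precisely the claimed identity (\ref{eq:222}). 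I do not anticipate a genuine obstacle here: the argument is a direct substitution followed by a one-line power integral, and the only point requiring care is tracking the role of $\theta\neq1$ and the algebraic simplification of the coefficients.
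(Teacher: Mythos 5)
Your proposal is correct and follows essentially the same route as the paper: both reduce the problem to the identity $\int_{x}^{\infty}\frac{1}{F(t)}\,dG(t)=\frac{\theta}{\theta-1}\bigl(1-e^{-(\theta-1)T(x)}\bigr)$ and then substitute into (\ref{G:symbol:F}), with your change of variable $u=F(t)$ simply making explicit the verification the paper leaves to the reader. The handling of $\theta\neq1$ and the final algebra are accurate.
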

\begin{proof}
Under the PRHRM (\ref{reversed:H:R:Model}), we can verify that
\begin{eqnarray*} 
\int_{x}^{\infty}\frac{1}{F(t)}{\rm d}G(t)=\frac{\theta}{\theta-1}\left(1-{\rm e}^{-(\theta-1)T(x)}\right),
\qquad x>0.
\end{eqnarray*}
This identity and  (\ref{G:symbol:F}) thus yield (\ref{eq:222}).
\hfill{$\Box$}
\end{proof}
\par
Note that the assumptions of Proposition \ref{prop:reversed} are satisfied by the  cases (a) and (b) 
shown in Example \ref{eq:FGab}, for $a\neq c$.
\begin{example}\rm
Let $X_{m:m}=\max\{X_1,\ldots,X_m\}$ be the lifetime of the parallel system consisting of $m$ components 
with absolutely continuous i.i.d. lifetimes $X_1,\ldots,X_m,$ having the common cdf $F$. Moreover, 
suppose that $X_i,\ i=1,\ldots,m,$ and $Y$ satisfy the PRHRM with proportionality 
constant $\theta$, as in (\ref{reversed:H:R:Model}). Then after some calculations, from (\ref{G:symbol:F}) we can 
express the CDF of $X_{m:m}[Y]$ as the following generalized mixture: 
$$
G\widetilde{\#}F_{m:m}(x)
=\frac{\theta F^m(x) - m F^{\theta}(x)}{\theta-m},
\qquad x>0,
$$
for $\theta\neq m$.
\hfill{$\Box$}
\end{example}
\par
In the forthcoming theorem we investigate the commutative property for the reversed relevation transform.
\begin{theorem}\label{th:commut}
The reversed relevation transform of $X$ and $Y$ is commutative if and only if $X$ and $Y$ satisfy the 
PRHRM. 
\end{theorem}
\begin{proof}
If $X$ and $Y$ satisfy the PRHRM as specified in (\ref{reversed:H:R:Model}), 
then (\ref{G:symbol:F}) yields
$$
 G\widetilde{\#}F(x)=F\widetilde{\#}G(x)=\frac{\theta F(x)-G(x)}{\theta-1},\qquad  x>0,
$$
for $\theta>0$, $\theta\neq 1$, and the desired result follows. To prove the converse, we assume that, for all 
$x>0$,
\begin{equation}\label{equ:1}
G(x)+F(x)\int_{x}^{\infty}\frac{1}{F(t)}{\rm d}G(t)=F(x)+G(x)\int_{x}^{\infty}\frac{1}{G(t)}{\rm d}F(t).
\end{equation}
Differentiating both sides of \eqref{equ:1}, we have
\begin{equation}\label{equ:2}
f(x)\int_{x}^{\infty}\frac{1}{F(t)}{\rm d}G(t)=g(x)\int_{x}^{\infty}\frac{1}{G(t)}{\rm d}F(t),
\end{equation}
where $f$ and $g$ denote the PDFs of $X$ and $Y$, respectively.  
Again, differentiating both sides of \eqref{equ:2}, we obtain
\begin{equation}\label{equ:3}
f'(x)\int_{x}^{\infty}\frac{1}{F(t)}{\rm d}G(t)-\frac{f(x)g(x)}{F(x)}
 =g'(x)\int_{x}^{\infty}\frac{1}{G(t)}{\rm d}F(t)-\frac{f(x)g(x)}{G(x)}.
\end{equation}
From \eqref{equ:1}--\eqref{equ:3} and some algebraic simplification, we obtain
$$
 \frac{f'(x)}{f(x)}-\frac{f(x)}{F(x)}=\frac{g'(x)}{g(x)}-\frac{g(x)}{G(x)},\qquad  x>0,
$$
i.e.
$$
 \frac{d}{dx}\ln\frac{f(x)}{F(x)}=\frac{d}{dx}\ln\frac{g(x)}{G(x)},\qquad  x>0.
$$
Integration on both sides yields
$$
 \ln\frac{f(x)}{F(x)}=\ln\frac{g(x)}{G(x)}+\text{constant},\qquad  x>0,
$$
or
$$
 \frac{f(x)}{F(x)}=\theta\frac{g(x)}{G(x)},\qquad x>0,
$$
where $\theta$ is a positive constant. 
Thus, we obtain $G(x)=[F(x)]^{\theta}$, $x>0$, which completes the proof. 
\end{proof}
\par
Hereafter, we analyse some results that are dual to those given above. 
Let $X$ and $Y$ be absolutely continuous nonnegative random variables with survival functions 
$\bar F(x)$ and $\bar G(x)$, and hazard rate functions $h_X(x)=-({\rm d}/{\rm d}x)\log\bar{F}(x)$ 
and $h_Y(x)=-({\rm d}/{\rm d}x)\log\bar{G}(x)$, respectively. Then $X$ and $Y$ satisfy the proportional hazards rate model with proportionality constant $\theta>0$, if $h_Y(x)=\theta h_X(x)$ for all $x>0$. This is  equivalent to the model
\begin{equation}\label{H:R:Model}
\bar{G}(x)=[\bar{F}(x)]^\theta,\qquad \theta>0,
\end{equation}
where $\bar{F}(x)=e^{-\Lambda(x)},\ x>0$, is the parent survival function and $\Lambda(x)=-\log\bar{F}(x)$, denotes the hazard function; see, e.g.\ \cite{Gupta-1998}. Let $X(Y)$ denote the total time of $X$ given that it exceeds an 
independent random inspection time $Y$, i.e.\ $X(Y)=[X|X>Y]$. Then we have
\begin{eqnarray}\label{G:symbol:F:bar}
\mathbb P(X(Y)>x)&=&\bar{G}{\#}\bar{F}(x)\nonumber\\
&=&\bar{G}(x)+\bar{F}(x)\int_{0}^{x}\frac{1}{\bar{F}(t)}{\rm d}G(t),\qquad x>0,
\end{eqnarray}
where the symbol ${\#}$ denotes the relevation transform introduced by Krakowski \cite{Krakowski}. Equation (\ref{G:symbol:F:bar}) was discussed in \cite{Kapo-Psarr}; 
see also \cite{Burkschat-Navarro} and \cite{Psarrakos} and the references therein.
\par
The following result is analogous to Proposition \ref{pr:1} and, thus, the proof is omitted. 
\begin{proposition}\label{pr:1simm}  
If $X$ and $Y$ are independent nonnegative random variables,  then
$$ 
 X(Y)\geq_{\rm st}\max\{X,Y\}.
$$ 
\end{proposition}
\par
Let us now see the analogous of Proposition \ref{prop:reversed}. 
\begin{proposition}\label{prop:hazard}
Under the proportional hazards rate model (\ref{H:R:Model}), we have 
\begin{eqnarray*} 
 \bar{G}{\#}\bar{F}(x)=\frac{1}{\theta-1}\left(\theta {\rm e}^{-\Lambda(x)}-{\rm e}^{-\theta\Lambda(x)}\right),
 \qquad  x>0,
\end{eqnarray*}
for $\theta>0$, $\theta\neq 1$.
\end{proposition}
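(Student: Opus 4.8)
The plan is to mirror the proof of Proposition~\ref{prop:reversed}, exploiting the formal duality between the reversed relevation transform~(\ref{G:symbol:F}) and the relevation transform~(\ref{G:symbol:F:bar}): the integral now runs over $(0,x]$ rather than $[x,\infty)$, survival functions replace distribution functions, and the cumulative reversed hazard $T$ is replaced by the hazard function $\Lambda$. The whole argument reduces to evaluating the single integral $\int_0^x \bar{F}(t)^{-1}\,dG(t)$ under the assumption $\bar{G}=\bar{F}^\theta$ and substituting the outcome into~(\ref{G:symbol:F:bar}).

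First I would rewrite the integrand in terms of $\Lambda$. Since $\bar{F}(x)=e^{-\Lambda(x)}$ and, under~(\ref{H:R:Model}), $\bar{G}(x)=e^{-\theta\Lambda(x)}$, differentiation gives $g(t)=\theta\Lambda'(t)e^{-\theta\Lambda(t)}$, so that $\bar{F}(t)^{-1}\,dG(t)=\theta\Lambda'(t)e^{-(\theta-1)\Lambda(t)}\,dt$. The change of variable $u=\Lambda(t)$, together with $\Lambda(0)=0$ (which holds because $X$ is nonnegative with $\bar{F}(0)=1$), turns the integral into the elementary exponential integral $\theta\int_0^{\Lambda(x)} e^{-(\theta-1)u}\,du$. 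Carrying out this integration yields the key identity
\[
\int_0^x \frac{1}{\bar{F}(t)}\,dG(t)=\frac{\theta}{\theta-1}\Big(1-e^{-(\theta-1)\Lambda(x)}\Big),\qquad x>0,
\]
which is the exact dual of the identity obtained in the proof of Proposition~\ref{prop:reversed}.

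Finally I would insert this identity into~(\ref{G:symbol:F:bar}), using $\bar{G}(x)=e^{-\theta\Lambda(x)}$ and $\bar{F}(x)=e^{-\Lambda(x)}$, and collect the coefficients of $e^{-\Lambda(x)}$ and $e^{-\theta\Lambda(x)}$; the terms containing $e^{-\theta\Lambda(x)}$ combine with total coefficient $1-\frac{\theta}{\theta-1}=-\frac{1}{\theta-1}$, producing the stated closed form $\frac{1}{\theta-1}\big(\theta e^{-\Lambda(x)}-e^{-\theta\Lambda(x)}\big)$. I do not anticipate a genuine obstacle, since every step is algebraic once the integral is evaluated; the only points requiring care are the justification $\Lambda(0)=0$ that fixes the lower limit of the transformed integral, and the tacit restriction $\theta\neq 1$ needed so that the division by $\theta-1$ is legitimate (the case $\theta=1$ degenerates, as already visible in Example~\ref{eq:FGab}).
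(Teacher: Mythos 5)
Your proposal is correct and follows essentially the same route as the paper: the paper's proof likewise consists of establishing the identity $\int_0^x \bar{F}(t)^{-1}\,dG(t)=\frac{\theta}{\theta-1}\bigl(1-e^{-(\theta-1)\Lambda(x)}\bigr)$ and substituting it into (\ref{G:symbol:F:bar}). You simply spell out the change of variable and the final algebra that the paper leaves implicit.
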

\begin{proof}
Since
\begin{eqnarray*} 
\int_{0}^{x}\frac{1}{\bar{F}(t)}{\rm d}G(t)
=\frac{\theta}{\theta-1}\left(1-{\rm e}^{-(\theta-1)\Lambda(x)}\right),
\qquad x>0,
\end{eqnarray*}
the proof follows from (\ref{G:symbol:F:bar}).
\hfill{$\Box$}
\end{proof}
\begin{example}\rm
Suppose that $X_{1:m}=\min\{X_1,\ldots,X_n\}$ is the lifetime of the series system consisting of $m$ components 
with absolutely continuous i.i.d.\ lifetimes $X_1,\ldots,X_m,$ having the common 
CDF $F$. Also, suppose that $X_i,\ i=1,\ldots,m,$ and $Y$ satisfy the proportional hazard rate model with 
proportionality constant $\theta$, as in (\ref{H:R:Model}). 
By means of  some calculations, from (\ref{G:symbol:F:bar}) we obtain the survival function of $X_{1:m}(Y)$, 
which is expressed as a generalized mixture: 
$$
\bar{G}{\#}\bar{F}_{1:m}(x)=\frac{\theta \bar{F}^m(x)-m\bar{F}^{\theta}(x)}{\theta-m},
\qquad x>0,
$$
for $\theta\neq m$. 
\hfill{$\Box$}
\end{example}
\par
The relevation transform is not always commutative. Indeed, in the following theorem 
we give a necessary and sufficient condition leading to such a property. Being similar 
to Theorem \ref{th:commut} we provide only a sketch of the proof.  
\begin{theorem}\label{th:commut2}
The relevation transform of $X$ and $Y$ is commutative if and only if $X$ and $Y$ satisfy the 
proportional  hazard rate model. 
\end{theorem}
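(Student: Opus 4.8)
The plan is to run, step for step, the dual of the argument for Theorem~\ref{th:commut}, obtained by replacing the cdf $F$ by the survival function $\bar{F}$, the reversed hazard rate by the hazard rate, and the upper tail integral $\int_x^\infty$ by the lower one $\int_0^x$ that appears in the relevation transform (\ref{G:symbol:F:bar}). As in the reversed case, the statement splits into an easy sufficiency part, handled with Proposition~\ref{prop:hazard}, and a more delicate necessity part, handled by differentiating the commutativity identity twice and eliminating the resulting integrals.

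For sufficiency I would assume the proportional hazards rate model (\ref{H:R:Model}), $\bar{G}(x)=[\bar{F}(x)]^\theta$ with $\theta>0$, $\theta\neq 1$. Proposition~\ref{prop:hazard} gives $\bar{G}{\#}\bar{F}(x)=\frac{1}{\theta-1}(\theta e^{-\Lambda(x)}-e^{-\theta\Lambda(x)})=\frac{\theta\bar{F}(x)-\bar{G}(x)}{\theta-1}$. To evaluate $\bar{F}{\#}\bar{G}$ I would exploit the symmetry of the model: writing $\bar{F}(x)=[\bar{G}(x)]^{1/\theta}$ exhibits the same model with the roles of $X$ and $Y$ interchanged and proportionality constant $1/\theta$. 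Applying Proposition~\ref{prop:hazard} in this interchanged form and simplifying yields again $\frac{\theta\bar{F}(x)-\bar{G}(x)}{\theta-1}$, so $\bar{G}{\#}\bar{F}=\bar{F}{\#}\bar{G}$ and the transform is commutative; the excluded value $\theta=1$ is trivial, since then $F=G$.

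For necessity, assume commutativity, so that (\ref{G:symbol:F:bar}) gives, for all $x>0$,
\[
\bar{G}(x)+\bar{F}(x)\int_0^x\frac{1}{\bar{F}(t)}\,dG(t)=\bar{F}(x)+\bar{G}(x)\int_0^x\frac{1}{\bar{G}(t)}\,dF(t).
\]
I would then differentiate twice, using $\bar{F}'=-f$ and $\bar{G}'=-g$, so that the boundary terms produced by differentiating the integrals cancel the derivatives of the leading survival functions. The first differentiation collapses to the dual of (\ref{equ:2}),
\[
f(x)\int_0^x\frac{1}{\bar{F}(t)}\,dG(t)=g(x)\int_0^x\frac{1}{\bar{G}(t)}\,dF(t),
\]
and the second gives the dual of (\ref{equ:3}), carrying $+\frac{f(x)g(x)}{\bar{F}(x)}$ and $+\frac{f(x)g(x)}{\bar{G}(x)}$ (note the sign change relative to the reversed case, since here one differentiates $\int_0^x$). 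Eliminating the two integrals among these three relations should reduce everything to
\[
\frac{f'(x)}{f(x)}+\frac{f(x)}{\bar{F}(x)}=\frac{g'(x)}{g(x)}+\frac{g(x)}{\bar{G}(x)},\qquad x>0,
\]
that is, $\frac{d}{dx}\ln\frac{f(x)}{\bar{F}(x)}=\frac{d}{dx}\ln\frac{g(x)}{\bar{G}(x)}$. Integrating gives $g(x)/\bar{G}(x)=\theta\,f(x)/\bar{F}(x)$ for a positive constant $\theta$, i.e.\ $h_Y=\theta h_X$, which is equivalent to (\ref{H:R:Model}).

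The main obstacle is this elimination step. After substituting the first-derivative identity $\int_0^x\frac{dG}{\bar{F}}=(g/f)\int_0^x\frac{dF}{\bar{G}}$ into the second-derivative identity, one is left with a single integral multiplied by $\frac{f'}{f}-\frac{g'}{g}$; to remove it I would feed back the original commutativity identity to express that integral through $\bar{F},\bar{G},f,g$, after which a factor $\bar{G}(x)-\bar{F}(x)$ cancels and produces the hazard-rate equation displayed above. This requires $\bar{G}\not\equiv\bar{F}$ (otherwise $F=G$ and the model holds trivially with $\theta=1$) and that $f,g$ do not vanish on the support, so these degenerate cases must be treated separately; the positivity of the integration constant $\theta$ is automatic, since $f/\bar{F}$ and $g/\bar{G}$ are nonnegative.
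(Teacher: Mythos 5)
Your proposal is correct and follows essentially the same route as the paper: the paper's own proof is exactly the dual of Theorem \ref{th:commut}, differentiating the commutativity identity, eliminating the integrals to reach $\frac{f'(x)}{f(x)}+\frac{f(x)}{\bar F(x)}=\frac{g'(x)}{g(x)}+\frac{g(x)}{\bar G(x)}$, and integrating to obtain (\ref{H:R:Model}). Your write-up is in fact more explicit than the paper's sketch about the elimination step and the degenerate case $\bar F\equiv\bar G$.
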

\begin{proof}
Let $X$ and $Y$ satisfy the proportional hazard rate model as in (\ref{H:R:Model}).  
From (\ref{G:symbol:F:bar}), we thus have
$$
  \bar{G}{\#}\bar{F}(x)
  = \bar F{\#} \bar G(x)=\frac{\theta \bar F(x)-\bar G(x)}{\theta-1},
  \qquad  x>0,
$$
for $\theta>0$, $\theta\neq 1$, and then the relevation transform is commutative. 
To prove the converse, we assume that for all $x>0$
$$
 \bar{G}(x)+\bar{F}(x)\int_{0}^{x}\frac{1}{\bar{F}(t)}{\rm d}G(t)
 = \bar{F}(x)+\bar{G}(x)\int_{0}^{x}\frac{1}{\bar{G}(t)}{\rm d}F(t). 
$$
Differentiating both sides and after some calculations, we obtain
$$
 \frac{f'(x)}{f(x)}+\frac{f(x)}{\bar F(x)}=\frac{g'(x)}{g(x)}+\frac{g(x)}{\bar G(x)},\qquad  x>0,
$$
so that 
$$
 \frac{\rm d}{{\rm d}x}\ln\frac{f(x)}{\bar F(x)}=\frac{\rm d}{{\rm d}x}\ln\frac{g(x)}{\bar G(x)},\qquad  x>0.
$$
Such a relation implies that $\bar G(x)=[\bar F(x)]^{\theta}$, $x>0$, for $\theta>0$, thus completing the proof. 
\end{proof}
\par
The characterization of distributions based on the relevation transform has been the object of various investigations;   
see, e.g.\ Lau and Prakasa Rao \cite{LauPRao1}  and \cite{LauPRao2}. We point out that 
[14, Theorem 9] states that the relevation transform of two i.i.d.\ nonnegative continuous random variables is identically distributed to their convolution, 
i.e.\ $F\# F(x)=F*F(x)$ for all $x\geq 0$, if and only if they have exponential distribution. However, due to 
Proposition \ref{pr:1}, a similar result cannot hold for the reversed relevation transform. 
\section{Sequence of weighted distributions}
Let $X$ be an absolutely continuous nonnegative random variable with PDF $f(x)$ and CDF $F(x)$. 
Based on $X$, we construct a sequence of random variables $\{X_n,n\geq1\}$ as  
\begin{equation} 
X_1\stackrel{D}{=}X,  
\qquad 
[X_{n+1}\,|\,X_n=t]\stackrel{D}{=}[X_n\,|\,X_n\leq t],\qquad n\geq 1,
\label{eq:relXn}
\end{equation}
or equivalently
\begin{equation}
 X_1\stackrel{D}{=}X, \qquad 
 X_{n+1}\stackrel{D}{=}[X_n\,|\,X_n\leq X_n'],\qquad n\geq 1,
\label{sequence}
\end{equation}
where $X'_n$ is an independent copy of $X_n$. It is easy to show that the corresponding CDFs 
$F_n(x)=\mathbb P(X_n\leq x)$ are given as, for all $x>0$,
\begin{equation}\label{cdf:F:n+1}
 F_1(x)=F(x),\qquad 
 F_{n+1}(x)=F_n\widetilde{\#}F_n(x)=F_n(x)[1+T_n(x)],\qquad n\geq 1,
\end{equation}
where
\begin{equation}\label{cumulative:hazard:function}
T_n(x)=-\log F_n(x)=\int_{x}^{\infty}\tau_n(u){\rm d}u,\qquad x>0,
\end{equation}
denotes the cumulative hazard function of $X_n$, and
\begin{equation}\label{reversed:hazard:n}
\tau_n(u)=\frac{f_n(u)}{F_n(u)},\qquad u>0,
\end{equation}
is the reversed hazard rate of $X_n$. From   (\ref{cdf:F:n+1}), 
we can see that the corresponding densities are given by, for $x>0$,
\begin{equation}
 f_1(x)=f(x),\qquad 
 f_{n+1}(x)=T_n(x)f_n(x),\qquad n\geq 1.\label{pdf:F:n+1}
\end{equation}
Due to (\ref{cumulative:hazard:function}), for $x>0$, we have  
\begin{eqnarray*} 
T_{n+1}(x)=-\log F_{n+1}(x)=T_n(x)-\log(1+T_n(x)),\qquad n\geq 1,
\end{eqnarray*}
and, thus, from (\ref{pdf:F:n+1}), we obtain 
\begin{eqnarray}\label{density:n+1:2}
f_{n+1}(x)=T_n(x)f_n(x)=T_n(x)T_{n-1}(x)f_{n-1}(x)=\cdots=\prod_{i=1}^{n}T_i(x)f(x),\qquad n\geq 1.
\end{eqnarray}
From (\ref{density:n+1:2}), we see that $f_{n+1}(x)$ is a sequence of weighted PDFs. 
We recall that, given an absolutely continuous random variable $X$ having density $f$ 
and a nonnegative real function $w$, the associated weighted random variable $X^w$ has the PDF 
$$
 f^w(x)=\frac{w(x)f(x)}{\mathbb E[w(X)]}, \qquad x\in \mathbb{R},
$$
provided that $0<\mathbb E[w(X)]<\infty$. See  \cite{LiYuHu12},    \cite{NaSuLi11}, and   \cite{Bart09}
for some recent papers on weighted distributions. 
\par
The sequence of random variables $\{X_n,n\geq1\}$ is suitable to describe an iterative process, where 
$X_n$ denotes the random time required to perform a task at the $n$th stage. 
For instance, consider a training procedure where, given that the $n$th learning time $X_n$ has duration $t$, 
the $(n+1)$th random time is identically distributed to $X_n$ conditional on $X_n\leq t$. 
In some sense,   (\ref{eq:relXn}) expresses that the information collected at each stage allows the 
next step of the procedure to have a stochastically smaller duration. Alternatively, $\{X_n,n\geq1\}$ may be viewed as 
the sequence of lifetimes of an item that is repaired instantaneously after each failure, such that after each 
repair the duration of the next lifetime is stochastically smaller than the previous, due to imperfect repairs and 
weakening caused by wear. 
\par
From   (\ref{cdf:F:n+1}), since $T_n(x)\geq0$, for all $x>0$ and $n\geq1$, we derive that $F_{n+1}(x)\geq F_{n}(x)$ for 
all $x>0$. Hence, we conclude that $X_{n}\geq_{\rm st}X_{n+1}$ for all $n\geq1$. In the following theorem, we obtain the same result for a stronger stochastic order.
\begin{theorem}\label{thm:LR:order}
Consider the sequence of random variables $\{X_n,n\geq1\}$ as defined in (\ref{sequence}). For all $n=1,2,\cdots$, we have
\begin{eqnarray*} 
X_n\geq_{\rm lr}X_{n+1}.
\end{eqnarray*}
\end{theorem}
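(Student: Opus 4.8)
The plan is to verify the likelihood ratio order $X_n \geq_{lr} X_{n+1}$ directly from its definition, namely that the ratio $f_n(x)/f_{n+1}(x)$ is increasing in $x$ (equivalently, $f_{n+1}(x)/f_n(x)$ is decreasing). Since the recursion (\ref{pdf:F:n+1}) gives $f_{n+1}(x)=T_n(x)f_n(x)$, the ratio collapses to a single clean expression:
$$
\frac{f_{n+1}(x)}{f_n(x)}=T_n(x)=-\log F_n(x),\qquad x>0.
$$
So the entire problem reduces to showing that $T_n(x)$ is a decreasing function of $x$. First I would record this reduction, so that the likelihood ratio comparison between two consecutive members of the sequence becomes a monotonicity statement about the single function $T_n$.

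The decreasing monotonicity of $T_n$ is then immediate. Because $F_n$ is a cdf it is increasing in $x$, hence $\log F_n(x)$ is increasing, and therefore $T_n(x)=-\log F_n(x)$ is decreasing in $x$ for every fixed $n\geq 1$. This shows $f_{n+1}(x)/f_n(x)$ is decreasing, which is exactly the definition of $X_n \geq_{lr} X_{n+1}$. In this form there is essentially no obstacle: the hard structural work has already been done in deriving the weighted-density recursion (\ref{pdf:F:n+1}), and monotonicity of $T_n$ falls out of $F_n$ being a distribution function.

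The one point I would be careful about is the support and the behaviour at the endpoints, since the likelihood ratio order requires comparing $f_n(x)g(y)\geq f_n(y)g(x)$ for $x\leq y$ over the union of supports, where ratios may involve division by zero. Here both $X_n$ and $X_{n+1}$ share the same support as $X$ (the weight $T_n$ is strictly positive on $(0,\infty)$ by (\ref{cumulative:hazard:function}) since $F_n(x)<1$ there), so the supports coincide and the ratio $T_n(x)$ is well defined and finite on the common support. Thus I expect the proof to be short: state the reduction via (\ref{pdf:F:n+1}), observe $T_n(x)=-\log F_n(x)$ is decreasing because $F_n$ is increasing, and conclude $X_n\geq_{lr}X_{n+1}$ for all $n\geq 1$, with the support check noted for completeness.
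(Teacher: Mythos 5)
Your proposal is correct and follows essentially the same route as the paper: both reduce the likelihood ratio comparison to the identity $f_{n+1}(x)/f_n(x)=T_n(x)$ from (\ref{pdf:F:n+1}) and conclude from the fact that $T_n(x)=-\log F_n(x)$ is decreasing. Your additional remark about the common support and finiteness of the ratio is a sensible completeness check that the paper leaves implicit.
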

\begin{proof}
From the definition of likelihood ratio order and (\ref{pdf:F:n+1}), we conclude that
\begin{eqnarray}\label{equation:1}
\frac{f_{n+1}(x)}{f_n(x)}=T_n(x),\qquad n\geq1.
\end{eqnarray}
The right-hand-side of (\ref{equation:1}) is decreasing in $x>0$ and, hence, the claimed result follows.
\end{proof}
\begin{theorem} 
Let $n\geq1$. If $X_n$ is ILR then
\begin{eqnarray*} 
X_n\geq_{\rm lr\uparrow}X_{n+1}.
\end{eqnarray*}
\end{theorem}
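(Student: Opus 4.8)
The plan is to unwind the up shifted likelihood ratio order through its density characterization recorded in Definition~\ref{def:2.2}. Since the claim $X_n\geq_{lr\uparrow}X_{n+1}$ is just $X_{n+1}\leq_{lr\uparrow}X_n$, I would apply the definition with the role of the smaller variable played by $X_{n+1}$ (density $f_{n+1}$) and that of the larger one by $X_n$ (density $f_n$). Thus it suffices to show that, for each fixed $x\geq 0$, the ratio $f_n(t)/f_{n+1}(t+x)$ is increasing in $t$ on the relevant range. This reduces the whole statement to a single monotonicity check.

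The key step is to insert the basic recursion $f_{n+1}(y)=T_n(y)f_n(y)$ from (\ref{pdf:F:n+1}) and factor the ratio as
$$
\frac{f_n(t)}{f_{n+1}(t+x)}=\frac{1}{T_n(t+x)}\cdot\frac{f_n(t)}{f_n(t+x)}.
$$
I would then argue that each of the two factors is nonnegative and increasing in $t$, so that their product is increasing. For the first factor, recall from (\ref{cumulative:hazard:function}) that $T_n(u)=-\log F_n(u)$ is nonnegative and decreasing in $u$, since $F_n$ is a cumulative distribution function; hence $t\mapsto T_n(t+x)$ is decreasing and $t\mapsto 1/T_n(t+x)$ is increasing. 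For the second factor, the hypothesis that $X_n$ is ILR means that $f_n$ is log-concave, equivalently that $f_n'/f_n$ is decreasing; computing
$$
\frac{d}{dt}\log\frac{f_n(t)}{f_n(t+x)}=\frac{f_n'(t)}{f_n(t)}-\frac{f_n'(t+x)}{f_n(t+x)}\geq 0,
$$
where the inequality uses $x\geq 0$ together with the monotonicity of $f_n'/f_n$, shows that $f_n(t)/f_n(t+x)$ is increasing in $t$. Since a product of two nonnegative increasing functions is increasing, the desired monotonicity follows.

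The argument is short, so the only points requiring care are bookkeeping ones rather than a genuine obstacle: I must make sure the factorization is valid where $f_{n+1}(t+x)>0$, i.e.\ that $T_n(t+x)>0$ so that $1/T_n(t+x)$ is well defined, and that the monotonicity is verified precisely on the shifted supports appearing in Definition~\ref{def:2.2}. Apart from confirming that $T_n$ stays strictly positive on the interior of the support of $X_n$ (which holds because there $F_n<1$), there is nothing delicate; the whole proof rests on combining the explicit form $f_{n+1}=T_nf_n$ with the two elementary monotonicity facts above.
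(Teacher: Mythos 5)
Your proof is correct and follows essentially the same route as the paper's: both rest on the factorization $f_{n+1}=T_nf_n$, the fact that $T_n=-\log F_n$ is decreasing, and the log-concavity of $f_n$ supplied by the ILR hypothesis. The only difference is presentational --- you write the reciprocal ratio as a product of two nonnegative increasing factors, whereas the paper differentiates $T_n(x+t)f_n(x+t)/f_n(x)$ and bounds the derivative --- so the two arguments are interchangeable.
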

\begin{proof}
From the definition of up-shifted likelihood ratio, it is sufficient to prove that the function $f_{n+1}(x+t)/f_n(x)$ is decreasing in $x$ for all $t>0$. First, from (\ref{pdf:F:n+1}), we observe that
\begin{eqnarray*} 
\frac{f_{n+1}(x+t)}{f_n(x)}=T_n(x+t)\frac{f_{n}(x+t)}{f_n(x)}.
\end{eqnarray*}
On the other hand, recalling (\ref{cumulative:hazard:function}), we have
\begin{eqnarray}
&& \frac{\partial}{\partial x}\left\{T_n(x+t)\frac{f_{n}(x+t)}{f_n(x)}\right\} 
\nonumber\\
&&\quad =\frac{[-\tau_n(x+t)f_n(x+t)+T_n(x+t)f'_n(x+t)]f_n(x)-T_{n}(x+t)f_n(x+t)f'_n(x)}{f^2_n(x)}
\nonumber\\
&&\quad \leq \frac{T_n(x+t)[f'_n(x+t)f_n(x)-f_n(x+t)f'_n(x)]}{f^2_n(x)}.
\label{differntiate} 
\end{eqnarray}
The last Expression in (\ref{differntiate}) is negative since, by  assumption, $X_n$ is ILR, i.e.
\begin{eqnarray*} 
\frac{f'_{n}(x)}{f_n(x)}\geq\frac{f'_{n}(x+t)}{f_n(x+t)},\qquad \hbox{for all\ } 0<x\leq t+x.
\end{eqnarray*}
The proof is thus completed.
\end{proof}
\begin{remark}\label{rem:DRHR}\rm 
If $X$ is DRHR, i.e.\ $\tau(x)$ is decreasing, then $T(x)$ is convex. 
\end{remark}
\begin{theorem} 
Let $n\geq1$. If $X_n$ is DLR then $X_{n+1}$ is DLR.
\end{theorem}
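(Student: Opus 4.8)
The plan is to establish the DLR property of $X_{n+1}$ by showing that its density $f_{n+1}$ is log-convex, exploiting the product structure $f_{n+1}(x)=T_n(x)f_n(x)$ recorded in (\ref{pdf:F:n+1}). Since $\log f_{n+1}=\log T_n+\log f_n$ and a sum of convex functions is convex, and since $\log f_n$ is convex by the standing DLR hypothesis on $X_n$, the whole problem reduces to proving that the cumulative reversed hazard function $T_n(x)=-\log F_n(x)$ is itself log-convex. This is the heart of the matter; everything else is bookkeeping.

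To get there I would first record an auxiliary monotonicity fact: a log-convex (hence DLR) density on $(0,\infty)$ is necessarily non-increasing. Indeed, log-convexity makes $f_n'/f_n$ non-decreasing, so if $f_n'(x_0)/f_n(x_0)>0$ at some point then $f_n$ would grow at least exponentially on $(x_0,\infty)$, contradicting integrability; hence $f_n'\le 0$ throughout. Feeding $f_n'\le 0$ into $\tau_n'=(f_n'F_n-f_n^2)/F_n^2$ gives $\tau_n'\le -f_n^2/F_n^2<0$, so $X_n$ is DRHR, i.e.\ the reversed hazard rate $\tau_n$ of (\ref{reversed:hazard:n}) is decreasing.

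Next I would upgrade this to log-convexity of $\tau_n$ itself. Writing $\tau_n=f_n/F_n$, one has $(\log\tau_n)''=(\log f_n)''-(\log F_n)''=(\log f_n)''-\tau_n'$; the first term is $\ge 0$ by DLR and the second is $\le 0$ by the DRHR step just obtained, so $(\log\tau_n)''\ge 0$ and $\tau_n$ is log-convex. Finally I would invoke the fact that log-convexity is preserved under the tail-integral operator: since $T_n(x)=\int_x^\infty\tau_n(u)\,du$ by (\ref{cumulative:hazard:function}), a tangent-line estimate $\tau_n(u)\ge\tau_n(x)\,e^{(\log\tau_n)'(x)(u-x)}$ (integrable on $(x,\infty)$ because $(\log\tau_n)'<0$, guaranteed by $\tau_n\downarrow 0$) yields $-\tau_n'(x)T_n(x)\ge\tau_n(x)^2$, which is exactly $T_nT_n''\ge(T_n')^2$, the log-convexity of $T_n$. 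Combining the log-convexity of $T_n$ with that of $f_n$ gives log-convexity of the product $f_{n+1}$, completing the argument.

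I expect the main obstacle to be the log-convexity of $T_n$, and within it the two supporting facts that do the real work: that a DLR density must be monotone decreasing (so that DRHR comes for free) and that the tail integral $\int_x^\infty$ preserves log-convexity. The monotonicity fact is the one most easily overlooked, yet it is precisely what turns the otherwise inconclusive sign analysis of $(\log\tau_n)''$ into a clean inequality; without it one is left trying to bound $\tau_n'$ quantitatively, which the DLR hypothesis alone does not readily supply.
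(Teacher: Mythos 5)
Your proof is correct and shares the paper's opening move --- writing $\log f_{n+1}=\log f_n+\log T_n$ via (\ref{pdf:F:n+1}) and reducing everything to the convexity of $\log T_n$ --- but it diverges, instructively, at the key step. The paper obtains $\log T_n$ convex by arguing that DLR implies DRHR, hence $T_n$ convex (Remark \ref{rem:DRHR}), and then composing with $\log$ on the grounds that ``$\log(\cdot)$ is convex and increasing''. That composition argument is not sound as written: $\log$ is concave, and a positive, decreasing, convex function need not be log-convex (take $(1-x)^2$ on $(0,1)$, whose logarithm is concave); mere convexity of $T_n$ is not enough. You instead prove the property that is actually needed, log-convexity of $T_n$, from scratch: DLR forces $f_n$ to be non-increasing (your integrability argument), hence $\tau_n'\le -f_n^2/F_n^2<0$; then $(\log\tau_n)''=(\log f_n)''-\tau_n'\ge 0$, so $\tau_n$ is log-convex; and the tangent-line estimate shows that the tail integral $T_n(x)=\int_x^\infty\tau_n(u)\,du$ inherits log-convexity, which is precisely $T_nT_n''\ge (T_n')^2$. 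All three steps check out (the same last step also follows from H\"older's inequality applied to $T_n(x)=\int_0^\infty\tau_n(x+v)\,dv$), and your version supplies the rigour that the published proof lacks at its crucial juncture. One caveat: your monotonicity step --- and equally the paper's appeal to ``DLR implies DRHR'' --- requires the support of $X_n$ to be unbounded above; on a bounded support a log-convex density can be increasing and the implication can fail. Since the paper tacitly makes the same assumption, this is a shared limitation rather than a defect peculiar to your argument, but it deserves to be stated explicitly.
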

\begin{proof}
From (\ref{pdf:F:n+1}), we need to show that
\begin{eqnarray}\label{thm:proof:DLR}
\log f_{n+1}(x)=\log f_{n}(x)+\log T_n(x)
\end{eqnarray}
is convex for all $x>0$. If $X_n$ is DLR then $\log f_{n}(x)$ is convex. 
On the other hand, it is well known that if $X_n$ is DLR then $X_n$ is DRHR and, hence, $T_n(x)$ is convex due to Remark \ref{rem:DRHR}. Since the function $\log(\cdot)$ is convex and increasing,   $\log T_n(x)$ is convex. From (\ref{thm:proof:DLR}), we have that $\log f_{n+1}(x)$ is the sum of two convex functions and then the desired result follows.
\hfill{$\Box$}
\end{proof}
\begin{remark}\rm 
If $X_n$ is ILR then $X_{n+1}$ is not necessarily ILR. To show this fact, consider the following example.
\end{remark}
\begin{example}\rm
Suppose that $X_1$ has CDF $F(x)=x^{\alpha},\ 0<x<1\ (\alpha>0)$. It is easy to see that
$\tau(x)=\alpha/x$, and $T(x)=-\alpha\log x,\ 0<x<1$. It follows that
\begin{eqnarray*} 
\frac{f'_1(x)}{f_1(x)}=\frac{\alpha-1}{x},\qquad 0<x<1,
\end{eqnarray*}
is decreasing in $x\in(0,1)$ for all $\alpha\geq1$ and, hence, $X_1$ is ILR. 
(Note that $T(x)$ is convex according to Remark \ref{rem:DRHR}.)
On the other hand, we have
\begin{eqnarray}\label{D:second}
\frac{f'_2(x)}{f_2(x)}=\frac{1+(\alpha-1)\log x}{x\log x},
\qquad 0<x<1.
\end{eqnarray}
It is easy to show that the right-hand side of (\ref{D:second}) is not decreasing in $x$ for all $\alpha\geq1$ and, thus, $X_2$ is not ILR.
\end{example}
\begin{proposition}\label{DRHR:q:x}
Let $q(x)$ be a nonnegative function of $x>0$. If $q(x)\tau_1(x)$ is a decreasing function of $x>0$, then $q(x)\tau_n(x)$ is also a decreasing function of $x>0$ for all $n=1,2,\ldots$.
\end{proposition}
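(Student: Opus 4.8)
The plan is to proceed by induction on $n$, exploiting a simple recursion that the construction forces on the reversed hazard rates $\tau_n$. First I would combine the density relation $f_{n+1}(x)=T_n(x)f_n(x)$ from (\ref{pdf:F:n+1}) with the cdf relation $F_{n+1}(x)=F_n(x)[1+T_n(x)]$ from (\ref{cdf:F:n+1}), together with the definition (\ref{reversed:hazard:n}), to obtain
$$
\tau_{n+1}(x)=\frac{f_{n+1}(x)}{F_{n+1}(x)}=\frac{T_n(x)}{1+T_n(x)}\,\tau_n(x),\qquad x>0,\ n\geq1.
$$
This identity is the backbone of the argument: each step multiplies $\tau_n$ by the factor $T_n(x)/(1+T_n(x))$.

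Next I would analyse that multiplicative factor. Since $F_n$ is a cdf it is increasing, so by (\ref{cumulative:hazard:function}) the function $T_n(x)=-\log F_n(x)$ is nonnegative and decreasing in $x$. The map $t\mapsto t/(1+t)$ is nonnegative and increasing on $[0,\infty)$, hence its composition with the decreasing function $T_n$ shows that $x\mapsto T_n(x)/(1+T_n(x))$ is itself nonnegative and decreasing in $x>0$, for every $n$.

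With these two ingredients the induction is immediate. The base case $n=1$ is exactly the hypothesis that $q(x)\tau_1(x)$ is decreasing. For the inductive step, assuming $q(x)\tau_n(x)$ is decreasing (and noting it is nonnegative, since $q\geq0$ and $\tau_n\geq0$), I would write
$$
q(x)\tau_{n+1}(x)=\frac{T_n(x)}{1+T_n(x)}\cdot\big[q(x)\tau_n(x)\big],
$$
expressing $q\tau_{n+1}$ as a product of two nonnegative decreasing functions of $x$. Since the product of two nonnegative decreasing functions is again decreasing, the inductive step closes and the claim holds for all $n\geq1$.

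The only step requiring genuine care is the monotonicity of the factor $T_n(x)/(1+T_n(x))$; everything else is routine. That step rests on recognizing $T_n$ as the negative logarithm of a cdf (hence decreasing) and on the elementary monotonicity of $t\mapsto t/(1+t)$. Once this is in place, the nonnegativity assumption on $q$ is exactly what is needed to pass the product of two decreasing factors through the inductive hypothesis.
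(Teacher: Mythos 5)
Your proof is correct and follows essentially the same route as the paper: both derive the recursion $q(x)\tau_{n+1}(x)=\frac{T_n(x)}{1+T_n(x)}\,q(x)\tau_n(x)$, observe that $T_n(x)/(1+T_n(x))$ is nonnegative and decreasing because $T_n$ is decreasing and $t\mapsto t/(1+t)$ is increasing, and conclude by induction via the product of nonnegative decreasing functions. You merely spell out the inductive step that the paper leaves implicit after treating the case $n=2$.
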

\begin{proof}
We just show that under the hypothesis, the function $q(x)\tau_2(x)$ is a decreasing function of $x>0$. From   
(\ref{cdf:F:n+1}) and (\ref{pdf:F:n+1}), we have
\begin{equation}\label{thm:proof:q:X:n}
q(x)\tau_2(x)=q(x)\frac{f_2(x)}{F_2(x)} 
=q(x)\frac{T_1(x)f_1(x)}{(1+T_1(x))F_1(x)} 
=q(x)\tau_1(x)\frac{T_1(x)}{1+T_1(x)}.
\end{equation}
Since $x/(x+1)$ is an increasing function of $x>0$, and the function $T_1(x)$ is decreasing with respect to $x>0$, then the function ${T_1(x)}/({1+T_1(x)})$ is decreasing with respect to $x>0$. From (\ref{thm:proof:q:X:n}), we thus obtain that $q(x)\tau_2(x)$ is a decreasing function of $x>0$. The rest of the proof follows by induction.
\end{proof}
\begin{corollary}
If $X_1$ is DRHR then $X_n$ is DRHR for all $n\geq2$.
\end{corollary}
\begin{proof}
The proof follows from Proposition \ref{DRHR:q:x} by taking $q(x)=1$ for all $x>0$.
\end{proof}
\par
Let us now consider the following property. 
\begin{definition}\label{def:LB-DRHR}\rm
Let $X$ be an absolutely continuous random variable with support $(l_X,u_X)$. 
We say that $X$ has the DRHR in a length-biased sense (LBDRHR) if 
$x\,\tau(x)$ is decreasing in $x\in (l_X,u_X)$. 
\end{definition}
\par
We remark that a necessary and sufficient condition such that $X$ is LBDRHR has been given 
in terms of stochastic comparison of quantile-based distributions in  \cite{DiCMaMu}. 
Other results on the characterization given in Definition \ref{def:LB-DRHR} will be the object of 
a future investigation. 
\begin{corollary}
If $X_1$ is LBDRHR, then $X_n$ is LBDRHR for all $n\geq2$.
\end{corollary}
\begin{proof}
The proof follows from Proposition \ref{DRHR:q:x} by taking $q(x)=x$ for all $x>0$.
\hfill{$\Box$}
\end{proof}
\par
Consider now the following stochastic order from \cite{Rezaei}. 
\begin{definition} \rm 
Let $X$ and $Y$ be absolutely continuous random variables with reversed hazard rates $\tau_X(x)$ and 
$\tau_Y(x)$, respectively. The random variable $X$ is said to be smaller than $Y$ in relative reversed hazard rate order (denoted by $X\leq_{\rm RRH}Y$), if $\tau_Y(x)/\tau_X(x)$ is an increasing function of $x$.
\end{definition}
\par
For instance, let $X$ and $Y$ denote the lifetimes of two components; given that the components have been found to be failed at the same time, then $X\leq_{\rm RRH}Y$ states that $Y$ has been lived longer than $X$ or, equivalently, $X$ aged faster than $Y$.
\begin{proposition}\label{prop:RRH}
The sequence of random variables defined in (\ref{sequence}) satisfies $X_{n}\geq_{\rm RRH}X_{n+1}$ for all $n\geq1$.
\end{proposition}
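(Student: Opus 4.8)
The plan is to reduce the claim to the monotonicity of a single explicitly computable ratio, exploiting the recursive relations already established for the sequence. By the definition of the relative reversed hazard rate order, the assertion $X_{n}\geq_{RRH}X_{n+1}$ is equivalent to $X_{n+1}\leq_{RRH}X_n$, which means that $\tau_n(x)/\tau_{n+1}(x)$ is an increasing function of $x>0$, where $\tau_n$ is the reversed hazard rate of $X_n$ given in (\ref{reversed:hazard:n}). Equivalently, it suffices to show that the reciprocal ratio $\tau_{n+1}(x)/\tau_n(x)$ is decreasing in $x>0$.

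First I would compute this ratio directly. Using the density relation $f_{n+1}(x)=T_n(x)f_n(x)$ from (\ref{pdf:F:n+1}) and the distribution relation $F_{n+1}(x)=F_n(x)[1+T_n(x)]$ from (\ref{cdf:F:n+1}), I obtain
$$
\frac{\tau_{n+1}(x)}{\tau_n(x)}=\frac{f_{n+1}(x)/F_{n+1}(x)}{f_n(x)/F_n(x)}=\frac{T_n(x)}{1+T_n(x)},\qquad x>0.
$$
This is precisely the factor that already surfaced in (\ref{thm:proof:q:X:n}) in the proof of Proposition \ref{DRHR:q:x}, so no new identity is required.

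The remaining step is to analyse the monotonicity of $T_n(x)/(1+T_n(x))$. Since $T_n(x)=-\log F_n(x)$ is decreasing in $x$ (because $F_n$ is a distribution function, hence nondecreasing), and since the map $u\mapsto u/(1+u)$ is increasing on $[0,\infty)$, the composition $T_n(x)/(1+T_n(x))$ is decreasing in $x>0$. Consequently $\tau_{n+1}(x)/\tau_n(x)$ is decreasing, so $\tau_n(x)/\tau_{n+1}(x)$ is increasing, which is exactly the condition $X_{n+1}\leq_{RRH}X_n$, i.e.\ $X_n\geq_{RRH}X_{n+1}$. The computation is valid for every $n\geq1$, so the full statement follows at once without any inductive step.

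I do not foresee a genuine obstacle here: the recursive formulas (\ref{cdf:F:n+1}) and (\ref{pdf:F:n+1}) do essentially all the work, and the one point requiring care is the bookkeeping of the order's direction, namely keeping track of which reversed hazard rate sits in the numerator and which in the denominator, so that the monotonicity conclusion is matched against the definition of $\leq_{RRH}$ rather than against its reverse.
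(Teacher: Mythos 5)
Your proof is correct and follows essentially the same route as the paper: both compute $\tau_{n+1}(x)/\tau_n(x)=T_n(x)/(1+T_n(x))$ from (\ref{cdf:F:n+1}) and (\ref{pdf:F:n+1}) and conclude from the monotonicity of this ratio. You merely spell out the two points the paper leaves implicit, namely why $T_n(x)/(1+T_n(x))$ is decreasing and how that monotonicity matches the direction of the $\leq_{RRH}$ definition.
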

\begin{proof}
From (\ref{cdf:F:n+1}), (\ref{reversed:hazard:n}) and (\ref{pdf:F:n+1}), we have
$$
\frac{\tau_{n+1}(x)}{\tau_{n}(x)}=\frac{f_{n+1}(x)F_n(x)}{F_{n+1}(x)f_n(x)}=\frac{T_n(x)}{1+T_n(x)}.
$$
We can see that the function $ {T_n(x)}/({1+T_n(x)})$ is decreasing with respect to $x>0$, for all $n\geq1$, which completes the proof.
\hfill{$\Box$}
\end{proof}
\par
We remark that the results stated in Theorem \ref{thm:LR:order} and Proposition \ref{prop:RRH} are not related to each other, since   likelihood ratio order does not imply $RRH$ rate order and vice versa.
\begin{theorem}\label{AFR:Y}
Consider the sequence of random variables $\{X_n,n\geq1\}$ as defined in (\ref{sequence}) with the reversed hazard rate functions defined in (\ref{reversed:hazard:n}), and let $Y$ be an absolutely continuous nonnegative 
random variable with the reversed hazard rate function $\tau_Y(x),\ x>0$.
If $X_1$ and $Y$ satisfy the PRHRM with $G(x)=[F_1(x)]^\theta,\ x>0$, then $X_n\leq_{\rm RRH}Y$ for all $n=1,2,\ldots$.
\end{theorem}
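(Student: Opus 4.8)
The plan is to reduce the claim to a chaining argument built on the monotone-ratio structure already established. Recall that $X_n \leq_{RRH} Y$ means precisely that $\tau_Y(x)/\tau_n(x)$ is increasing in $x$, so the entire proof amounts to showing that this single ratio is monotone.

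First I would settle the base of the chain. Since $X_1$ and $Y$ satisfy the proportional reversed hazards rate model with $G(x)=[F_1(x)]^\theta$, the equivalence recorded around (\ref{reversed:H:R:Model}) gives $\tau_Y(x)=\theta\,\tau_1(x)$ for all $x>0$. Hence $\tau_Y(x)/\tau_1(x)\equiv\theta$ is constant, and in particular increasing, so that $X_1\leq_{RRH}Y$.

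Next I would invoke Proposition \ref{prop:RRH}, which states $X_k\geq_{RRH}X_{k+1}$, i.e.\ $\tau_k(x)/\tau_{k+1}(x)$ is increasing in $x$ for every $k\geq1$ (its proof shows this ratio equals $(1+T_k(x))/T_k(x)$). The key algebraic observation is the telescoping factorization
$$
\frac{\tau_Y(x)}{\tau_n(x)}=\frac{\tau_Y(x)}{\tau_1(x)}\prod_{k=1}^{n-1}\frac{\tau_k(x)}{\tau_{k+1}(x)},\qquad n\geq1,
$$
with the empty product understood when $n=1$. Each factor on the right is nonnegative and increasing in $x$ — the first by the base step, the remaining $n-1$ by Proposition \ref{prop:RRH} — and a finite product of nonnegative increasing functions is again increasing. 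Therefore $\tau_Y(x)/\tau_n(x)$ is increasing, which is exactly $X_n\leq_{RRH}Y$.

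I expect no genuine obstacle here; the statement is essentially the transitivity of $\leq_{RRH}$ applied to the descending chain $X_n\leq_{RRH}\cdots\leq_{RRH}X_1\leq_{RRH}Y$. The only points needing a word of care are that the reversed hazard rates $\tau_k$ are strictly positive on the common support, so the ratios are well defined and positive, and that monotonicity of a product of increasing functions does require nonnegativity of the factors — both conditions hold automatically in this setting. An induction on $n$ combining the single-step inequality $X_{n+1}\leq_{RRH}X_n$ with $X_n\leq_{RRH}Y$ would yield the identical conclusion and could replace the explicit product if one prefers to state and apply transitivity of the order directly.
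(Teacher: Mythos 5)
Your proof is correct and takes essentially the same route as the paper's: the paper also starts from $\tau_1(x)/\tau_Y(x)=\theta^{-1}$ and multiplies by the one-step ratio $T_n(x)/(1+T_n(x))$ at each stage, closing by induction. Your telescoping product simply makes that induction explicit and cites Proposition \ref{prop:RRH} for the individual factors instead of recomputing them, which is a legitimate (and slightly cleaner) packaging of the identical argument.
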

\begin{proof}
Since $\tau_1(x)/\tau_Y(x)=\theta^{-1},\ x>0$, from (\ref{reversed:H:R:Model}) we have that
$$
\frac{\tau_2(x)}{\tau_Y(x)}=\frac{f_2(x)}{F_2(x)\tau_Y(x)} 
=\frac{\tau_1(x)}{\tau_Y(x)}\frac{T_1(x)}{1+T_1(x)} 
=\theta^{-1}\frac{T_1(x)}{1+T_1(x)}.
$$
The function ${T_1(x)}/({1+T_1(x)})$ is a decreasing with respect to $x>0$ and, hence, $X_2\leq_{\rm RRH}Y$. 
The rest of the proof follows by induction.
\hfill{$\Box$}
\end{proof}
\section{Connection with entropy and covariance}
In this section we obtain some results about the connection between the entropy and the inactivity time of the new weighted distribution function considered in the previous section. One of the most important measure of uncertainty is the differential entropy introduced by Shannon \cite{Shannon}. 
For an  absolutely continuous nonnegative random variable $X$ having PDF $f$,  
the differential entropy is defined by
\begin{eqnarray}\label{entropy}
H(X)=-\int_{0}^{\infty}{f}(x)\log{f}(x){\rm d}x,
\end{eqnarray}
where `$\log$' means natural logarithm and, by convention, $0\log 0=0$. 
The entropy $H(X)$ gives expected uncertainty contained in $f(t)$ about the predictability of an outcome of the random variable $X$. It is known that in many realistic situations, such as in survival analysis and reliability, one has information about the past time, i.e.\ the time elapsed after failure till time $t$, given that the unit has already failed. The entropy (\ref{entropy}) applied to conditioned random variable is useful to measure uncertainty in such situations. Di Crescenzo and Longobardi \cite{Di-Longobardi-2002} indeed considered the entropy for the past lifetime, called past entropy at time $t$ of $X$, denoted by
$$
\bar{H}(t)=-\int_{0}^{t}\frac{f(x)}{F(t)}\log\frac{f(x)}{F(t)}{\rm d}x,\qquad t>0;
$$
see also  \cite{Muliere}. Furthermore, the concept of dynamic cumulative entropy as an alternative measure of uncertainty for the inactivity time was introduced in \cite{Di-Longobardi-2009} and is defined as
\begin{eqnarray}\label{past:ce}
\mathcal{CE}(X;t)&=&-\int_{0}^{t}\frac{F(x)}{F(t)}\log\frac{F(x)}{F(t)}{\rm d}x\nonumber\\
&=&-\frac{1}{F(t)}\int_{0}^{t}F(x)\log F(x){\rm d}x-T(t)\tilde{\mu}(t),\qquad t>0,
\end{eqnarray}
where $\tilde{\mu}(\cdot)$ and $T(\cdot)$ are defined in   (\ref{def:mean:residual}) and (\ref{cumulative:reversed:hazard}), respectively. Note that
\begin{eqnarray}\label{ce}
\mathcal{CE}(X)=\lim_{t\rightarrow\infty}\mathcal{CE}(X;t)
=-\int_{0}^{\infty}F(x)\log{F}(x){\rm d}x
=\int_{0}^{\infty}F(x)T(x){\rm d}x,
\end{eqnarray}
where $\mathcal{CE}(X)$ is called cumulative entropy of $X$. Di Crescenzo and Longobardi 
\cite{Di-Longobardi-2009} also showed that the dynamic cumulative entropy and the mean inactivity time are connected as 
\begin{eqnarray*} 
\mathcal{CE}(X;t)=\mathbb E[\tilde{\mu}(X)\,|\,X\leq t],\qquad t>0
\end{eqnarray*}
and, thus,
\begin{eqnarray}\label{past:ce:mean:inactivity}
\mathcal{CE}(X)=\mathbb E[\tilde{\mu}(X)].
\end{eqnarray}
Now, we extend the results of  \cite{Kapo-Psarr} to the case of past time. 
Based on $X_1\stackrel{D}{=}X$, we consider the sequence of random variables $\{X_n,n\geq1\}$, with the corresponding distributions $F_n(x)$ as defined in (\ref{cdf:F:n+1}), and denote by 
\begin{equation}\label{mean:past:time}
\mu_n(t)=E[X_n\,|\,X_n\leq t],\qquad t>0,
\end{equation}
the mean past lifetime of $X_n,\ n\geq1$. Hereafter, we obtain the main results and connections between the dynamic cumulative entropy and the reversed hazard rate function.
\begin{theorem}\label{thm:condition:covariance}
For any $t>0$, and for all $n=1,2,\ldots$, we have
\begin{eqnarray*} 
\mathbb E\left[\frac{T_n(X_n)}{\tau_n(X_n)}\,\Big|\,X_n\leq t\right]=\mathcal{CE}(X_n;t)+\tilde{\mu}_n(t)T_n(t),
\end{eqnarray*}
\begin{eqnarray}\label{Cov:X:TX}
{\rm cov}\left[X_n,T_n(X_n)\,|\,X_n\leq t\right]=T_n(t)\left[\mu_n(t)-E(X_n)\right]-\mathcal{CE}(X_n;t).
\end{eqnarray}
\end{theorem}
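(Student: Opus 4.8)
The plan is to evaluate both identities as ordinary integrals against the conditional density $f_n(x)/F_n(t)$ of $[X_n|X_n\leq t]$ on $(0,t)$, and in each case to reduce the result to the dynamic cumulative entropy through its integral representation (\ref{past:ce}). For the first identity I would start from $1/\tau_n(x)=F_n(x)/f_n(x)$, which collapses the integrand:
\[
E\left[\frac{T_n(X_n)}{\tau_n(X_n)}\Big|X_n\leq t\right]=\frac{1}{F_n(t)}\int_0^t T_n(x)F_n(x)\,dx=-\frac{1}{F_n(t)}\int_0^t F_n(x)\log F_n(x)\,dx,
\]
using $T_n(x)=-\log F_n(x)$ from (\ref{cumulative:hazard:function}). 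The right-hand side is precisely the first summand in the second line of the definition (\ref{past:ce}) of $\mathcal{CE}(X_n;t)$, so solving that definition for the integral yields $\mathcal{CE}(X_n;t)+T_n(t)\tilde{\mu}_n(t)$, which is the claim.

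For the covariance I would first record the two one-dimensional moments. The key structural fact is $f_{n+1}(x)=T_n(x)f_n(x)$ from (\ref{pdf:F:n+1}); integrating it over $(0,t)$ and invoking (\ref{cdf:F:n+1}) gives $E[T_n(X_n)|X_n\leq t]=F_{n+1}(t)/F_n(t)=1+T_n(t)$, and letting $t\to\infty$ gives the unconditional value $E[T_n(X_n)]=1$ (indeed $F_n(X_n)$ is uniform, so $T_n(X_n)$ is standard exponential). The genuinely new ingredient is the cross moment $E[X_nT_n(X_n)|X_n\leq t]=\frac{1}{F_n(t)}\int_0^t x\,f_{n+1}(x)\,dx$, which I would attack by integration by parts taking $v=F_{n+1}$.

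That integration by parts produces $tF_{n+1}(t)-\int_0^t F_{n+1}(x)\,dx$; splitting $F_{n+1}=F_n(1+T_n)$ and using $\int_0^t F_n(x)\,dx=F_n(t)\tilde{\mu}_n(t)$ together with the cumulative-entropy integral already obtained in the first part, then substituting $\mu_n(t)=t-\tilde{\mu}_n(t)$, I expect the clean telescoping
\[
E[X_nT_n(X_n)|X_n\leq t]=\mu_n(t)\bigl(1+T_n(t)\bigr)-\mathcal{CE}(X_n;t).
\]
Finally I would expand $Cov[X_n,T_n(X_n)|X_n\leq t]=E[(X_n-E(X_n))(T_n(X_n)-E(T_n(X_n)))|X_n\leq t]$, insert $E(T_n(X_n))=1$ and the two computed conditional moments, and collect terms; the unconditional mean $E(X_n)$ surviving from the centering is exactly what generates the factor $T_n(t)[\mu_n(t)-E(X_n)]$.

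The main obstacle I anticipate is twofold. First, the boundary term at $x=0$ in the integration by parts must be shown to vanish: there $T_n(x)\to\infty$, but $xF_{n+1}(x)=x[F_n(x)-F_n(x)\log F_n(x)]\to 0$ since $u\log u\to 0$ as $u=F_n(x)\to 0^+$, so this is a mild limiting check rather than a real difficulty. Second, and more delicate, the bookkeeping must center the covariance at the unconditional means $E(X_n)$ and $E(T_n(X_n))=1$ rather than the conditional ones; it is precisely this choice that causes $E(X_n)$ to appear in the final expression, and carrying it consistently through the cancellations is where the care is needed.
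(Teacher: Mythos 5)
Your proposal is correct and follows essentially the same route as the paper's proof: the first identity via $1/\tau_n=F_n/f_n$ and the integral form of (\ref{past:ce}), and the covariance via $E[T_n(X_n)|X_n\leq t]=1+T_n(t)$ from $f_{n+1}=T_nf_n$, integration by parts for the cross moment, and centering at the unconditional means $E(X_n)$ and $E[T_n(X_n)]=1$. The two subtleties you flag (the vanishing boundary term and the unconditional centering) are exactly the points the paper's argument relies on.
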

\begin{proof}
For $t>0$, and $n\geq1$, we have
\begin{eqnarray*} 
\mathbb E\left[\frac{T_n(X_n)}{\tau_n(X_n)}\Big|X_n\leq t\right]&=&
\frac{1}{F_n(t)}\int_{0}^{t}\frac{T_n(x)}{\tau_n(x)}f_n(x){\rm d}x\nonumber\\
&=&\frac{1}{F_n(t)}\int_{0}^{t}T_n(x)F_n(x){\rm d}x\nonumber\\
&=&\mathcal{CE}(X_n;t)+\tilde{\mu}_n(t)T_n(t),
\end{eqnarray*}
where the last equality is obtained from (\ref{past:ce}). To prove the second expression, it is easy to show that the random variable $T_n(X_n)$ is exponentially distributed with unity mean and, hence, $\mathbb E[T_n(X_n)]=1$. Now, consider the following expression for $t>0$:
\begin{eqnarray}\label{thm:1}
{\rm cov}\left[X_n,T_n(X_n)\,|\,X_n\leq t\right]&=&
\mathbb E\left\{[X_n-\mathbb E(X_n)][T_n(X_n)-\mathbb E(T_n(X_n))]\,|\,X_n\leq t\right\}
\nonumber\\
&=&\mathbb E[X_nT_n(X_n)\,|\,X_n\leq t]-\mu_n(t) \nonumber\\
&& -\mathbb E(X_n)\mathbb E[T_n(X_n)\,|\,X_n\leq t]+\mathbb E(X_n).
\end{eqnarray}
We have that
\begin{eqnarray}\label{thm:2}
\mathbb E[T_n(X_n)\,|\,X_n\leq t]&=&\frac{1}{F_n(t)}\int_{0}^{t}T_n(x)f_n(x){\rm d}x \\
&=&\frac{1}{F_n(t)}\int_{0}^{t}f_{n+1}(x){\rm d}x\nonumber\\
&=&\frac{F_{n+1}(t)}{F_n(t)}\nonumber\\
&=&1+T_n(t).
\end{eqnarray}
Also, we see that
\begin{eqnarray*}\label{thm:3}
\mathbb E[X_nT_n(X_n)\,|\,X_n\leq t]=\frac{1}{F_n(t)}\int_{0}^{t}xf_{n+1}(x){\rm d}x.
\end{eqnarray*}
Integrating by parts, we can derive
\begin{eqnarray*}\label{thm:4}
\mathbb E[X_nT_n(X_n)\,|\,X_n\leq t]&=&\frac{1}{F_n(t)}\left[tF_{n+1}(t)-\int_{0}^{t}F_{n+1}(x){\rm d}x\right]\\
&=&t[1+T_n(t)]-\frac{1}{F_n(t)}\int_{0}^{t}[1+T_n(x)]F_n(x){\rm d}x,
\end{eqnarray*}
and by using (\ref{past:ce}) and after simplification, we obtain
\begin{eqnarray}\label{thm:5}
\mathbb E[X_nT_n(X_n)\,|\,X_n\leq t]=\mu_n(t)[1+T_n(t)]-\mathcal{CE}(X_n;t).
\end{eqnarray}
Substituting (\ref{thm:2}) and (\ref{thm:5}) into (\ref{thm:1}), the desired result (\ref{Cov:X:TX}) finally follows.
\hfill{$\Box$}
\end{proof}
\par
From Theorem \ref{thm:condition:covariance}, the following corollary is derived.
\begin{corollary}
Under the conditions of Theorem \ref{thm:condition:covariance}, we have, for $n\geq1$,
$$
  \displaystyle\lim_{t\rightarrow\infty}\mathbb E\left[\frac{T_n(X_n)}{\tau_n(X_n)}\,\Big|\,X_n\leq t\right]
  =\mathbb E  \left[\frac{T_n(X_n)}{\tau_n(X_n)}\right]=\mathcal{CE}(X_n),
$$
$$
 \displaystyle\lim_{t\rightarrow\infty}{\rm cov}\left[X_n,T_n(X_n)\,|\,X_n\leq t\right]={\rm cov}\left(X_n,T_n(X_n)\right)=-\mathcal{CE}(X_n).
$$
\end{corollary}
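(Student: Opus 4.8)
The plan is to pass to the limit $t\to\infty$ in the two identities established in Theorem \ref{thm:condition:covariance}, while separately recognizing each unconditional quantity as the limit of its conditional counterpart. Throughout I assume $E(X_n)<\infty$; this is automatic, since by Theorem \ref{thm:LR:order} (or merely $X_n\le_{st}X$) the means $E(X_n)$ are dominated by $E(X)$, and the appearance of $E(X_n)$ in the statement presupposes its finiteness.

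For the first bullet I would start from the computation inside the proof of Theorem \ref{thm:condition:covariance}, namely $E\!\left[\frac{T_n(X_n)}{\tau_n(X_n)}\,\Big|\,X_n\le t\right]=\frac{1}{F_n(t)}\int_0^t T_n(x)F_n(x)\,dx$. Since the integrand $T_n(x)F_n(x)$ is nonnegative and $F_n(t)\to1$, monotone convergence gives $\int_0^t T_n(x)F_n(x)\,dx\to\int_0^\infty T_n(x)F_n(x)\,dx=\mathcal{CE}(X_n)$ by (\ref{ce}); the same integral is exactly $E\!\left[\frac{T_n(X_n)}{\tau_n(X_n)}\right]$, which settles the middle and right equalities. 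To reconcile this with the identity of Theorem \ref{thm:condition:covariance}, which writes the conditional expectation as $\mathcal{CE}(X_n;t)+\tilde\mu_n(t)T_n(t)$, it then remains to check that the correction term vanishes, i.e.\ $\tilde\mu_n(t)T_n(t)\to0$, since $\mathcal{CE}(X_n;t)\to\mathcal{CE}(X_n)$ by (\ref{ce}).

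I expect this vanishing to be the one genuinely delicate point, because it is an indeterminate product: $T_n(t)=-\log F_n(t)\to0$ while $\tilde\mu_n(t)=F_n(t)^{-1}\int_0^t F_n(x)\,dx\to\infty$. The resolution is a rate estimate. Using $-\log(1-u)\le Cu$ for $u$ bounded away from $1$ one gets $T_n(t)\le C\,\bar F_n(t)$ for large $t$, while $\tilde\mu_n(t)\le t/F_n(t)$; hence $\tilde\mu_n(t)T_n(t)\le C\,t\,\bar F_n(t)/F_n(t)$. Finiteness of $E(X_n)$ forces $t\,\bar F_n(t)\to0$ (the standard tail estimate for integrable nonnegative variables, via $t\,\bar F_n(t)\le 2\int_{t/2}^t\bar F_n(x)\,dx$), and $F_n(t)\to1$, so the product tends to $0$ as required.

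For the second bullet I would argue analogously from $Cov[X_n,T_n(X_n)\mid X_n\le t]=T_n(t)[\mu_n(t)-E(X_n)]-\mathcal{CE}(X_n;t)$. As $t\to\infty$ one has $\mu_n(t)=E[X_n\mid X_n\le t]\to E(X_n)$ and $T_n(t)\to0$, so the leading product is now a product of two vanishing finite factors and tends to $0$, while $\mathcal{CE}(X_n;t)\to\mathcal{CE}(X_n)$; the limit is therefore $-\mathcal{CE}(X_n)$. Finally, because $P(X_n\le t)=F_n(t)\to1$, the conditional covariance converges to the unconditional covariance $Cov(X_n,T_n(X_n))$, which yields the stated identity $Cov(X_n,T_n(X_n))=-\mathcal{CE}(X_n)$ and completes the proof.
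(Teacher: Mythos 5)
Your proposal is correct and follows the same route the paper intends: the paper simply states that the corollary is ``derived'' from Theorem \ref{thm:condition:covariance} by letting $t\to\infty$, with no further justification. You additionally supply the one point the paper leaves implicit --- resolving the indeterminate product $\tilde\mu_n(t)T_n(t)$ via $T_n(t)\le C\,\bar F_n(t)$ and the tail estimate $t\,\bar F_n(t)\to 0$ for integrable $X_n$ --- and that argument is sound.
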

\begin{remark}\rm 
Note that the initial random variable was arbitrary selected. Therefore, for any absolutely continuous nonnegative random variable $X$, the following identities hold:
$$
 \mathbb E\left(\frac{T(X)}{\tau(X)}\right)=\mathcal{CE}(X), \qquad 
 {\rm cov}(X,T(X))=-\mathcal{CE}(X).
$$
Moreover, for all $t>0$, we have
$$
 \mathbb E\left[\frac{T(X)}{\tau(X)}\,\Big|\,X\leq t\right]=\mathcal{CE}(X;t)+\tilde{\mu}(t)T(t),
$$
$$
 {\rm cov}[X,T(X)\,|\,X\leq t]=T(t)[\mu(t)-\mathbb E(X)]-\mathcal{CE}(X;t),
$$
where $\mu(t)=\mathbb E[X\,|\,X\leq t],\ t>0,$ denotes the mean past lifetime of $X$.
\end{remark}
\begin{remark}\label{rem:5}\rm
Consider the sequence of random variables as defined in (\ref{sequence}). From (\ref{cdf:F:n+1}), we have, for $n\geq1$,
$$
 \bar{F}_{n+1}(x)=1-F_{n+1}(x)=\bar{F}_n(x)-T_n(x)F_n(x),\qquad x>0.
$$
Hence, recalling (\ref{ce}), we obtain the following iterative expression for the mean of $X_n$:
\begin{eqnarray}\label{E:X:n+1}
\mathbb E(X_{n+1})= \mathbb E(X_n)-\mathcal{CE}(X_n),\qquad n\geq1,
\end{eqnarray}
which also gives a new probabilistic meaning for the cumulative entropy. 
\end{remark}
\begin{theorem} 
For all $n=1,2,\ldots$, we have
\begin{eqnarray}\label{cov:Xn:Xn+1}
{\rm cov}(X_n,X_{n+1})={\rm cov}(X_n,X_n-\tilde{\mu}(X_n))={\rm var}(X_n)-{\rm cov}(X_n,\tilde{\mu}(X_n)).
\end{eqnarray}
\end{theorem}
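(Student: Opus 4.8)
The plan is to exploit the conditional (Markovian) structure built into the defining relation (\ref{eq:relXn}) and then reduce the covariance to a covariance of $X_n$ with a conditional expectation. First I would compute $E[X_{n+1}\mid X_n=t]$. By (\ref{eq:relXn}), given $X_n=t$ the variable $X_{n+1}$ is distributed as $[X_n\mid X_n\le t]$, so that $E[X_{n+1}\mid X_n=t]=E[X_n\mid X_n\le t]=\mu_n(t)$, the mean past lifetime introduced in (\ref{mean:past:time}). Writing the mean inactivity time of $X_n$ as $\tilde\mu_n(t)=E[t-X_n\mid X_n\le t]=t-\mu_n(t)$ in the spirit of (\ref{def:mean:residual}) (this $\tilde\mu_n$ being the $\tilde\mu$ that appears in the statement, with the index of the current variable understood), I obtain the key identity $E[X_{n+1}\mid X_n]=\mu_n(X_n)=X_n-\tilde\mu_n(X_n)$.

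The second step is the law of total covariance. Since $X_n$ is measurable with respect to the conditioning field, the standard identity $Cov(U,V)=Cov(U,E[V\mid U])$ applied with $U=X_n$ and $V=X_{n+1}$ gives $Cov(X_n,X_{n+1})=Cov(X_n,E[X_{n+1}\mid X_n])$. Substituting the identity from the first step yields the first equality in (\ref{cov:Xn:Xn+1}), namely $Cov(X_n,X_{n+1})=Cov(X_n,X_n-\tilde\mu_n(X_n))$. The second equality then follows immediately from the bilinearity of covariance, writing $Cov(X_n,X_n-\tilde\mu_n(X_n))=Cov(X_n,X_n)-Cov(X_n,\tilde\mu_n(X_n))=Var(X_n)-Cov(X_n,\tilde\mu_n(X_n))$.

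The only genuine point to watch, rather than a real obstacle, is the justification of $Cov(X_n,X_{n+1})=Cov(X_n,E[X_{n+1}\mid X_n])$: it relies on reading (\ref{eq:relXn}) as specifying a bona fide joint law of the vector $(X_n,X_{n+1})$ with the prescribed conditional distribution of $X_{n+1}$ given $X_n$, and then on the tower property $E[X_n\,E[X_{n+1}\mid X_n]]=E[X_n X_{n+1}]$ together with $E[X_{n+1}]=E[E[X_{n+1}\mid X_n]]$. Beyond this I would only need integrability of $X_n$ and of the product $X_n X_{n+1}$ so that all covariances are finite; once this is granted, everything else is linear algebra of second moments and no further computation is required.
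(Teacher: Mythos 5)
Your proof is correct and follows essentially the same route as the paper: both reduce the problem to the conditional mean $E[X_{n+1}\mid X_n=t]=E[X_n\mid X_n\le t]=t-\tilde{\mu}_n(t)$ and then pass to the covariance of $X_n$ with this function of $X_n$. The only cosmetic difference is that you invoke the identity $Cov(X_n,X_{n+1})=Cov(X_n,E[X_{n+1}\mid X_n])$ in one step, whereas the paper expands $E[X_nX_{n+1}]$ by hand and treats the term $E[X_n]E[X_{n+1}]$ via the iterative mean formula $E(X_{n+1})=E(X_n)-\mathcal{CE}(X_n)$ together with $\mathcal{CE}(X_n)=E[\tilde{\mu}_n(X_n)]$ --- the same computation written at slightly greater length.
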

\begin{proof}
From (\ref{def:mean:residual}) and (\ref{mean:past:time}), we have $\mu(t)=t-\tilde{\mu}(t)$, and then
\begin{eqnarray*} 
\mathbb E\left[X_nX_{n+1}|X_n=t\right]&=&t \mathbb E\left[X_{n+1}|X_n=t\right]\nonumber\\
&=&t \mathbb E\left[X_{n}|X_n\leq t\right]\nonumber\\
&=&t\{t-\tilde{\mu}_n(t)\},
\end{eqnarray*}
for $t>0$. Hence,
\begin{eqnarray}\label{eqaua:1}
\mathbb E\left[X_nX_{n+1}\right]&=&\mathbb E[X^2_n]- \mathbb E[X_n\tilde{\mu}_n(X_n)].
\end{eqnarray}
Moreover from (\ref{E:X:n+1}) and (\ref{eqaua:1}), we have
\begin{eqnarray*} 
{\rm cov}\left(X_n,X_{n+1}\right)&=&\mathbb E[X^2_n]- \mathbb E[X_n\tilde{\mu}_n(X_n)]-\mathbb E^2[X_n]+\mathbb E[X_n]\mathcal{CE}(X_n)\nonumber\\
&=&{\rm var}[X_n]-{\rm cov}(X_n,\tilde{\mu}_n(X_n)) \nonumber\\
&=&{\rm cov}(X_n,X_n-\tilde{\mu}_n(X_n)).
\end{eqnarray*}
The second equality follows from $\mathcal{CE}(X_n)= \mathbb E[\tilde{\mu}_n(X_n)]$, due to (\ref{past:ce:mean:inactivity}). The desired result then follows.
\hfill{$\Box$}
\end{proof}
\par
Now we use the probabilistic mean value theorem (see  \cite{Di-Crescenzo-1999}) to obtain an iterative result 
for $\mathcal{CE}(X_{n+1})$. We first recall the following result (see [12, Equation (12)]). 
\begin{lemma}
The derivative of the mean inactivity time of $X$, given in  (\ref{def:mean:residual}), can be expressed   
in terms of the reversed hazard rate function (when existing) as 
\begin{equation}
 \tilde \mu'(t)= 1-\tau(t)  \tilde \mu(t), \qquad t>0: \;\; F(t)>0. 
 \label{eq:derivtildemu}
\end{equation}
\end{lemma}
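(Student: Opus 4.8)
The plan is to differentiate the defining expression (\ref{def:mean:residual}) directly. First I would rewrite that identity in the product form
\[
 F(t)\,\tilde\mu(t)=\int_0^t F(x)\,dx,\qquad t>0,
\]
which avoids having to apply the quotient rule to the ratio $\tilde\mu(t)=\frac{1}{F(t)}\int_0^t F(x)\,dx$ and keeps the computation transparent.

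Next I would differentiate both sides with respect to $t$. On the left, the product rule gives $f(t)\,\tilde\mu(t)+F(t)\,\tilde\mu'(t)$, where $f=F'$ exists because $X$ is absolutely continuous (and the reversed hazard rate is assumed to exist). On the right, the fundamental theorem of calculus yields simply $F(t)$, since the integrand $F$ is continuous. Equating the two sides gives
\[
 f(t)\,\tilde\mu(t)+F(t)\,\tilde\mu'(t)=F(t).
\]

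Finally, dividing through by $F(t)$ (which is legitimate precisely on the set where $F(t)>0$, matching the hypothesis of the statement) and recalling the definition $\tau(t)=f(t)/F(t)$ from (\ref{def:reversed:hazard}), I would isolate $\tilde\mu'(t)$ to obtain $\tilde\mu'(t)=1-\tau(t)\,\tilde\mu(t)$, which is exactly (\ref{eq:derivtildemu}).

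The computation is essentially routine, so there is no serious obstacle; the only point requiring a little care is the regularity used in differentiating, namely that absolute continuity of $X$ guarantees the existence of $f$ and the applicability of the fundamental theorem of calculus to $\int_0^t F(x)\,dx$. Restricting to those $t$ with $F(t)>0$, as in the statement, removes any concern about dividing by zero.
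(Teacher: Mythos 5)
Your computation is correct and complete: rewriting (\ref{def:mean:residual}) as $F(t)\,\tilde\mu(t)=\int_0^t F(x)\,dx$, differentiating, and dividing by $F(t)$ is the standard derivation. The paper itself gives no proof (it simply cites Eq.\ (12) of Di Crescenzo and Longobardi (2009)), and your argument is exactly the routine verification that reference relies on, with the right attention paid to where $F(t)>0$.
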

\begin{theorem} 
For the sequence of random variables $\{X_n,n\geq1\}$ defined in (\ref{sequence}) and for all $n=1,2,\ldots$, we have
\begin{eqnarray}\label{thm:Xn+1}
\mathcal{CE}(X_{n+1})=\mathbb E[\tilde{\mu}_{n+1}(X_n)]-\mathbb E[\tilde{\mu}_{n+1}'(Z)]\mathcal{CE}(X_n),
\end{eqnarray}
where $\tilde{\mu}_{n+1}'(t)$ can be obtained from (\ref{eq:derivtildemu}) and $Z$ is an 
absolutely continuous nonnegative random variable having PDF
\begin{eqnarray}\label{Z:PDF}
f_{Z}(z)=\frac{F_{n+1}(z)-F_n(z)}{\mathbb E(X_n)-\mathbb E(X_{n+1})}=\frac{F_n(z)T_n(z)}{\mathcal{CE}(X_n)},
\qquad z>0.
\end{eqnarray}
\end{theorem}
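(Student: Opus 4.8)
The plan is to recognize the claimed identity as a direct application of the probabilistic mean value theorem (PMVT) of Di Crescenzo \cite{Di-Crescenzo-1999} to the pair $(X_{n+1},X_n)$, with the smooth function being the mean inactivity time $\tilde{\mu}_{n+1}$ of the $(n+1)$th variable of the sequence.

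First I would record the two ingredients supplied by the earlier results. By the connection (\ref{past:ce:mean:inactivity}) between cumulative entropy and mean inactivity time, applied to $X_{n+1}$, we have $\mathcal{CE}(X_{n+1})=E[\tilde{\mu}_{n+1}(X_{n+1})]$, so it suffices to compute this expectation. By the iterative mean formula (\ref{E:X:n+1}) we have $E(X_n)-E(X_{n+1})=\mathcal{CE}(X_n)$, and since (\ref{cdf:F:n+1}) gives $F_{n+1}(z)-F_n(z)=F_n(z)T_n(z)$, the random variable $Z$ with pdf (\ref{Z:PDF}) is well defined and its two displayed forms coincide. Moreover $F_{n+1}(z)-F_n(z)\geq0$ together with $\mathcal{CE}(X_n)>0$ confirms that $f_Z$ is a genuine density, consistently with the stochastic ordering $X_{n+1}\leq_{st}X_n$ established earlier.

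Next I would invoke the PMVT: for two nonnegative random variables ordered as $X_{n+1}\leq_{st}X_n$ and a differentiable function $g$ with suitable integrability, one has $E[g(X_n)]-E[g(X_{n+1})]=E[g'(Z)]\,(E(X_n)-E(X_{n+1}))$, where $Z$ has exactly the density (\ref{Z:PDF}). Taking $g=\tilde{\mu}_{n+1}$ and using $E(X_n)-E(X_{n+1})=\mathcal{CE}(X_n)$ yields
\[
E[\tilde{\mu}_{n+1}(X_n)]-E[\tilde{\mu}_{n+1}(X_{n+1})]=E[\tilde{\mu}_{n+1}'(Z)]\,\mathcal{CE}(X_n).
\]
Rearranging and substituting $\mathcal{CE}(X_{n+1})=E[\tilde{\mu}_{n+1}(X_{n+1})]$ gives the claimed formula (\ref{thm:Xn+1}); the derivative $\tilde{\mu}_{n+1}'$ appearing in the result is then expressed through the Lemma's identity (\ref{eq:derivtildemu}).

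The routine substitutions aside, the main obstacle is verifying the regularity hypotheses of the PMVT for the choice $g=\tilde{\mu}_{n+1}$: one must ensure that $\tilde{\mu}_{n+1}$ is absolutely continuous with a derivative for which $E[\tilde{\mu}_{n+1}'(Z)]$ is finite, and that $E[\tilde{\mu}_{n+1}(X_n)]$ exists. The differentiability is already guaranteed by the Lemma (existence of $\tilde{\mu}_{n+1}'$ via (\ref{eq:derivtildemu})), so the remaining work reduces to confirming that the mean inactivity time grows slowly enough relative to the tails of $X_n$ and $X_{n+1}$ for all the expectations in play to be finite.
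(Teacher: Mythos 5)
Your proposal is correct and follows essentially the same route as the paper: the authors likewise apply the probabilistic mean value theorem of Di Crescenzo (1999) to the stochastically ordered pair $X_{n+1}\leq_{st}X_n$ with $g=\tilde{\mu}_{n+1}$, using $\mathcal{CE}(X_{n+1})=E[\tilde{\mu}_{n+1}(X_{n+1})]$ and $E(X_n)-E(X_{n+1})=\mathcal{CE}(X_n)$ to identify the density of $Z$. You merely make explicit the rearrangement and the regularity checks that the paper leaves implicit.
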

\begin{proof}
Since $X_{n+1}\leq_{\rm st}X_n$ and $\mathcal{CE}(X_{n+1})=\mathbb E[\tilde{\mu}_{n+1}(X_{n+1})]$, the desired result immediately follows from [9, Proposition 3.1 and Theorem 4.1]. Note that the pdf in (\ref{Z:PDF}) is obtained from (\ref{cdf:F:n+1}) and (\ref{E:X:n+1}).
\hfill{$\Box$}
\end{proof}
For the sequence of random variables  defined in (\ref{sequence}) one can verify that, for $t>0$,
\begin{eqnarray}\label{derivate:mu}
\tilde{\mu}_{n+1}(t)=\tilde{\mu}_{n}(t)+\frac{\mathcal{CE}(X_n;t)}{1+T_n(t)}, \qquad n=1,2,\ldots.
\end{eqnarray}
\noindent Hence, from (\ref{derivate:mu}), we can write  (\ref{thm:Xn+1}) as 
\begin{eqnarray*} 
\mathcal{CE}(X_{n+1})=\mathcal{CE}(X_n)\left(1-\mathbb E[\tilde{\mu}_{n+1}'(Z)]\right)+\mathbb E\left[\frac{\mathcal{CE}(X_n;X_n)}{1+T_n(X_n)}\right], \qquad n=1,2,\ldots.
\end{eqnarray*}
%
\section{An integral operator}
Stimulated by some results shown in [18, Section 4],  
we now define a new operator which is dual to the $\textbf{T}_s$ operator introduced by 
Dickson and Hipp \cite{Dickson-Hipp}. It is known that the Dickson--Hipp operator for any $s\in \mathbb R$, 
denoted by $\widetilde{\textbf{T}}_sf(t)$, is defined by 
$$
 \textbf{T}_sf(t)=\int_{t}^{\infty}e^{-s(x-t)}f(x){\rm d}x,\qquad t>0,
$$
where $f(x)$ is an integrable function. Applications and properties of $\textbf{T}_s$ operator can be found in 
 \cite{Cai},  \cite{Dickson-Hipp}, and  \cite{Li-Garrido}, among others. For simplicity, we define an 
operator $\widetilde{\textbf{T}}_sf(x)$ for an integrable function $f$ and for $s\in \mathbb{R}$,   by
\begin{equation}\label{equation:T:f}
 \widetilde{\textbf{T}}_sf(t)=\int_{0}^{t}e^{-s(t-x)}f(x){\rm d}x,
 \qquad t>0.
\end{equation}
We can see that, for $t>0$, the two operators are related by the following identity:
$$
 \widetilde{\textbf{T}}_{-s}f(t)+ {\textbf{T}}_sf(t)=e^{st}\mathcal{L}_s[f],\qquad s\geq0,
$$
where
$$
 \mathcal{L}_s[f]=\int_{0}^{\infty}e^{-sx}f(x){\rm d}x,\qquad s\geq0,
$$
denotes the Laplace transform of the function $f$. Note that   (\ref{equation:T:f}) recalls a 
generalized Hardy operator, similar to that considered in [5, Definition 2]. Moreover, 
$\textbf{T}_sf(t)$ can be viewed as a convolution-type operator; see \cite{AbWh96}. 
For instance, if $s>0$  and $f(x)$ is a PDF then, due to (\ref{equation:T:f}), 
$(1/s) \widetilde{\textbf{T}}_sf(t)$ 
is the convolution between $f(x)$ and an exponential PDF with parameter $s$. 
\par
Suppose that $X$ is an absolutely continuous nonnegative random variable with  the CDF $F$. 
Then the $\widetilde{\textbf{T}}_s$ operator of $F$ is defined by
\begin{equation}\label{Tilde:operator}
\widetilde{\textbf{T}}_sF(t)=\int_{0}^{t}e^{-s(t-x)}F(x){\rm d}x,\qquad t>0, \  s\in \mathbb{R}.
\end{equation}
Integrating by parts, from (\ref{equation:T:f}) and (\ref{Tilde:operator}), it follows that
$$
\widetilde{\textbf{T}}_sf(t)=F(t)-s\widetilde{\textbf{T}}_sF(t),\qquad t>0.
$$
If $X$ is  an arbitrary absolutely continuous random variable with the pdf $f$ 
then, from (\ref{equation:T:f}), we have
$$
\lim_{t\rightarrow\infty}e^{st}\widetilde{\textbf{T}}_sf(t)=\mathbb E[{\rm e}^{sX}].
$$
Hence, (\ref{equation:T:f}) can be written in terms of the moment generating function of $X$ when $t$ goes to infinity. Now, we have an iterative result for $\widetilde{\textbf{T}}_sF_{n}(t)$.
\begin{theorem}\label{thm:T:tilda:q:n:s}
Let $\{X_n,n\geq1\}$ be a sequence of absolutely continuous nonnegative random variables with the corresponding CDFs defined in (\ref{cdf:F:n+1}) and let $\widetilde{\textbf{T}}_sF_n(t)$ be defined as in (\ref{Tilde:operator}). Then, for $t>0$,
\begin{equation}\label{T:tilda:q:n:s}
\widetilde{\textbf{T}}_sF_{n+1}(t)=[1+T_n(t)]\widetilde{\textbf{T}}_sF_n(t)+\widetilde{\textbf{T}}_sq_n(s,t),
\qquad n\geq1,\ s\in \mathbb{R},
\end{equation}
where $q_n(s,t)=\tau_n(t)\widetilde{\textbf{T}}_sF_n(t)$.
\end{theorem}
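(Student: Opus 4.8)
The plan is to substitute the recursion $F_{n+1}(x)=F_n(x)[1+T_n(x)]$ from (\ref{cdf:F:n+1}) into the definition (\ref{Tilde:operator}) of the operator and reduce the claim to a single integration by parts. Splitting off the summand $1$ immediately gives
$$\widetilde{\textbf{T}}_sF_{n+1}(t)=\widetilde{\textbf{T}}_sF_n(t)+\int_0^t e^{-s(t-x)}F_n(x)T_n(x)\,dx,$$
so it suffices to show that the remaining integral equals $T_n(t)\widetilde{\textbf{T}}_sF_n(t)+\widetilde{\textbf{T}}_sq_n(s,t)$.

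The key observation I would exploit is that, writing $A(x):=e^{sx}\widetilde{\textbf{T}}_sF_n(x)=\int_0^x e^{su}F_n(u)\,du$, one has $A'(x)=e^{sx}F_n(x)$. I would therefore rewrite the target integral as $e^{-st}\int_0^t A'(x)T_n(x)\,dx$ and integrate by parts, differentiating $T_n$ and antidifferentiating $e^{sx}F_n(x)$. Using $T_n'(x)=-\tau_n(x)$, which is immediate from (\ref{cumulative:hazard:function}) and (\ref{reversed:hazard:n}), the boundary term at $t$ produces $T_n(t)\widetilde{\textbf{T}}_sF_n(t)$ after reinserting the factor $e^{-st}$, while the surviving integral becomes $\int_0^t e^{-s(t-x)}\tau_n(x)\widetilde{\textbf{T}}_sF_n(x)\,dx=\widetilde{\textbf{T}}_sq_n(s,t)$, exactly the claimed term with $q_n(s,x)=\tau_n(x)\widetilde{\textbf{T}}_sF_n(x)$. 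Adding back $\widetilde{\textbf{T}}_sF_n(t)$ then yields $[1+T_n(t)]\widetilde{\textbf{T}}_sF_n(t)+\widetilde{\textbf{T}}_sq_n(s,t)$, which is (\ref{T:tilda:q:n:s}).

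The two points that require care are the grouping of factors in the integration by parts and the boundary term at the origin. The calculation is clean only because the weight $e^{-s(t-x)}$ merges with $F_n(x)$ into the antiderivative $A(x)$, leaving $T_n(x)$ to be differentiated; moreover the minus sign in $T_n'=-\tau_n$ cancels the minus sign coming from integration by parts, producing the $+\widetilde{\textbf{T}}_sq_n$ term rather than a difference. As for the lower endpoint, $A(0)=0$ but $T_n(0)=+\infty$, so I would separately verify $\lim_{x\to0^+}A(x)T_n(x)=0$; since $F_n$ is increasing one has $A(x)\le e^{|s|x}x\,F_n(x)$ near $0$, whence $A(x)T_n(x)\le e^{|s|x}x\,(-F_n(x)\log F_n(x))\to0$ because $-y\log y\to0$ as $y\to0^+$, so no contribution arises at the origin. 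All integrals are over the bounded interval $(0,t)$ with continuous integrands, so no convergence issues occur. This endpoint estimate is the only genuinely technical point; the heart of the proof is the single well-chosen integration by parts.
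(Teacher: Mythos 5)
Your proof is correct, but it takes a different route from the paper at the key step. Both arguments begin identically, by substituting $F_{n+1}=F_n[1+T_n]$ into the definition of $\widetilde{\textbf{T}}_s$ and reducing to the integral $\int_0^t e^{-s(t-x)}F_n(x)T_n(x)\,dx$. The paper then writes $T_n(x)=\int_x^t\tau_n(u)\,du+T_n(t)$ and applies Tonelli's theorem to the resulting double integral over the triangle $\{0<x<u<t\}$, interchanging the order of integration to produce $\int_0^t e^{-s(t-u)}\tau_n(u)\widetilde{\textbf{T}}_sF_n(u)\,du$ directly; since the integrand is nonnegative, no boundary analysis is needed. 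You instead integrate by parts, antidifferentiating $e^{sx}F_n(x)$ into $A(x)=e^{sx}\widetilde{\textbf{T}}_sF_n(x)$ and differentiating $T_n$ via $T_n'=-\tau_n$; the boundary term at $t$ gives $T_n(t)\widetilde{\textbf{T}}_sF_n(t)$ and the remaining integral gives $\widetilde{\textbf{T}}_sq_n(s,t)$. The two computations are essentially dual (integration by parts on $[0,t]$ is Fubini on the triangle in disguise), so the trade-off is small: your route requires verifying that $A(x)T_n(x)\to 0$ as $x\to 0^+$, which you do correctly using $A(x)\le e^{|s|x}xF_n(x)$ and the boundedness of $-y\log y$ on $(0,1]$, while the paper's route sidesteps any endpoint issue at the cost of a slightly less mechanical manipulation of the region of integration. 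Either argument is a complete proof of (\ref{T:tilda:q:n:s}).
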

\begin{proof}
From  (\ref{cdf:F:n+1}) and (\ref{Tilde:operator}), we have
\begin{eqnarray*}
\widetilde{\textbf{T}}_sF_{n+1}(t)&=&\int_{0}^{t}e^{-s(t-x)}F_n(x){\rm d}x+\int_{0}^{t}e^{-s(t-x)}T_n(x)F_n(x){\rm d}x\\
&=&\widetilde{\textbf{T}}_sF_n(t)+\int_{0}^{t}e^{-s(t-x)}\left[\int_{x}^{\infty}\tau_n(u){\rm d}u\right]F_n(x){\rm d}x\\
&=&\widetilde{\textbf{T}}_sF_n(t)+\int_{0}^{t}e^{-s(t-x)}\left[\int_{x}^{t}\tau_n(u){\rm d}u\right]F_n(x){\rm d}x\\
&&+\int_{0}^{t}e^{-s(t-x)}\left[\int_{t}^{\infty}\tau_n(u){\rm d} u\right]F_n(x){\rm d}x\\
&=&[1+T_n(t)]\widetilde{\textbf{T}}_sF_n(t)+\int_{0}^{t}e^{-s(t-u)}\tau_n(u)\left[\int_{0}^{u}e^{-s(u-x)}F_n(x){\rm d}x\right]{\rm d}u\\
&=&[1+T_n(t)]\widetilde{\textbf{T}}_sF_n(t)+\int_{0}^{t}e^{-s(t-u)}\tau_n(u)\widetilde{\textbf{T}}_sF_n(u){\rm d}u,
\end{eqnarray*}
and the proof is completed. 
\end{proof}
\par
The following corollary can be obtained from (\ref{past:ce:mean:inactivity}) and (\ref{T:tilda:q:n:s}) by setting $s=0$.
\begin{corollary}
Under the conditions of Theorem \ref{thm:T:tilda:q:n:s}, we have, for $t>0$,
\begin{equation}\label{colo:T:operator}
\widetilde{\textbf{T}}_0F_{n+1}(t)=F_n(t)[\widetilde{\mu}_n(t)(1+T_n(t))+\mathcal{CE}(X_n;t)],\qquad n\geq1.
\end{equation}
\end{corollary}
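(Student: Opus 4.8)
The plan is to specialize the recursion (\ref{T:tilda:q:n:s}) to $s=0$ and then identify the two resulting terms with quantities attached to the inactivity time of $X_n$. First I would observe that at $s=0$ the operator in (\ref{Tilde:operator}) collapses to a plain integral, $\widetilde{\textbf{T}}_0F_n(t)=\int_0^t F_n(x)\,dx$. Comparing this with the definition of the mean inactivity time (\ref{def:mean:residual}) applied to $X_n$ gives at once
$$
\widetilde{\textbf{T}}_0F_n(t)=F_n(t)\,\tilde{\mu}_n(t),\qquad t>0.
$$
Substituting $s=0$ into the first term on the right-hand side of (\ref{T:tilda:q:n:s}) then produces $[1+T_n(t)]F_n(t)\tilde{\mu}_n(t)$, which already supplies the $\tilde{\mu}_n(t)(1+T_n(t))$ contribution inside the bracket of (\ref{colo:T:operator}).

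The core of the argument is the second term $\widetilde{\textbf{T}}_0q_n(0,t)$. Using the displayed identity above together with $\tau_n(u)=f_n(u)/F_n(u)$, I would evaluate the kernel as $q_n(0,u)=\tau_n(u)\widetilde{\textbf{T}}_0F_n(u)=f_n(u)\tilde{\mu}_n(u)$, so that
$$
\widetilde{\textbf{T}}_0q_n(0,t)=\int_0^t f_n(u)\,\tilde{\mu}_n(u)\,du.
$$
The key step is then to recognize this integral as $F_n(t)\,\mathcal{CE}(X_n;t)$. This follows from the dynamic cumulative entropy identity $\mathcal{CE}(X_n;t)=E[\tilde{\mu}_n(X_n)\mid X_n\leq t]$ recorded just before (\ref{past:ce:mean:inactivity}), since that conditional expectation equals $F_n(t)^{-1}\int_0^t \tilde{\mu}_n(u)f_n(u)\,du$ by definition of the conditional density.

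Combining the two contributions yields
$$
\widetilde{\textbf{T}}_0F_{n+1}(t)=[1+T_n(t)]F_n(t)\tilde{\mu}_n(t)+F_n(t)\,\mathcal{CE}(X_n;t)=F_n(t)\big[\tilde{\mu}_n(t)(1+T_n(t))+\mathcal{CE}(X_n;t)\big],
$$
which is exactly (\ref{colo:T:operator}). The only mildly delicate point is the identification of the second term; should one prefer to bypass the conditional-expectation representation, the same conclusion is reached by integrating $\int_0^t \tau_n(u)\big(\int_0^u F_n(x)\,dx\big)\,du$ by parts with antiderivative $\log F_n(u)=-T_n(u)$, whereupon the boundary contribution at $t$ equals $-T_n(t)F_n(t)\tilde{\mu}_n(t)$, the boundary contribution at $0$ vanishes under the standing assumptions, and the interior integral $\int_0^t F_n(u)T_n(u)\,du$ together with these terms reproduces precisely the second line of (\ref{past:ce}) applied to $X_n$, i.e.\ $F_n(t)\mathcal{CE}(X_n;t)$. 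I expect no substantive obstacle here: the result is a clean specialization, and the main task is simply the bookkeeping that turns $\widetilde{\textbf{T}}_0q_n(0,t)$ into the dynamic cumulative entropy.
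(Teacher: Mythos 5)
Your proof is correct and follows exactly the route the paper indicates (the paper only remarks that the corollary ``can be obtained from (\ref{past:ce:mean:inactivity}) and (\ref{T:tilda:q:n:s}) by setting $s=0$''): you specialize the recursion to $s=0$, use $\widetilde{\textbf{T}}_0F_n(t)=F_n(t)\tilde{\mu}_n(t)$, and identify $\int_0^t f_n(u)\tilde{\mu}_n(u)\,du$ with $F_n(t)\,\mathcal{CE}(X_n;t)$ via the conditional-expectation representation of the dynamic cumulative entropy. Both your main argument and your integration-by-parts alternative check out, so this is simply a complete write-up of the paper's one-line sketch.
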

\subsection{Computational results}
\begin{figure}
\centering
\subfigure[]{\includegraphics[scale=0.49]{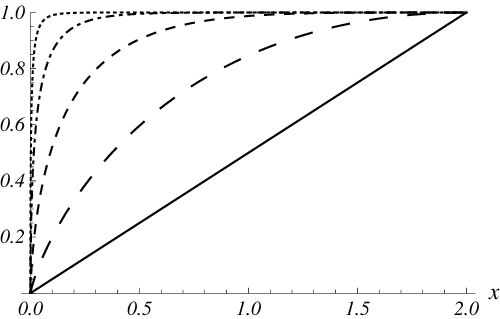}}
\;
\subfigure[]{\includegraphics[scale=0.49]{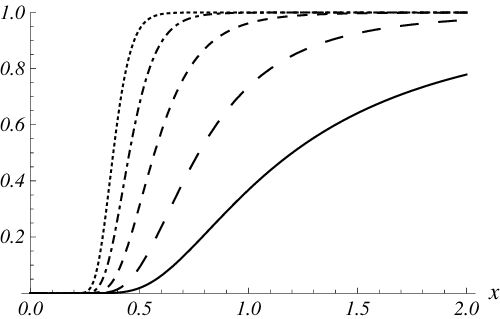}}
\;
\subfigure[]{\includegraphics[scale=0.49]{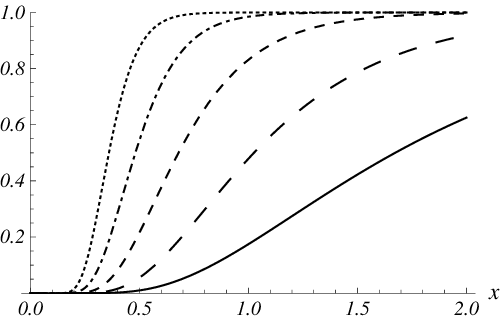}}
\\
\subfigure[]{\includegraphics[scale=0.49]{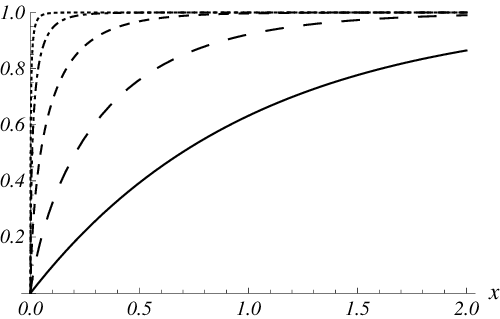}}
\;
\subfigure[]{\includegraphics[scale=0.49]{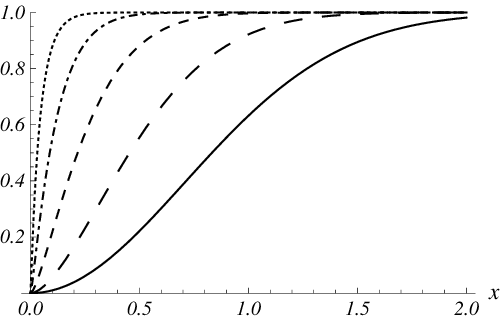}}
\;
\subfigure[]{\includegraphics[scale=0.49]{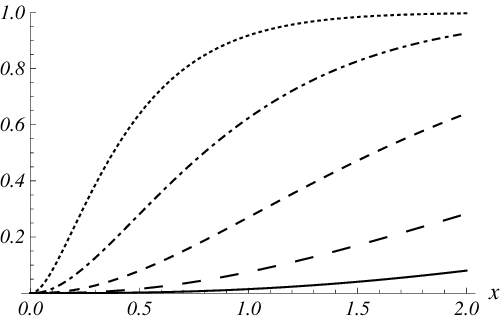}}
\caption{CDFs of $X_1,X_2,\ldots,X_5$ when $X_1$ follows 
the distributions given in Table 1.}
\label{}
\end{figure}
\begin{table}[t]
\centering
\caption{Starting distribution functions. }
\begin{tabular}{lccccccc}\hline
(a): \ $F(x)= {x}/{2},\quad 0< x< 2$\\
(b): \ $F(x)={\rm e}^{-x^{-2}}, \quad x>0$ \\
(c): \ $F(x)={\rm e}^{-3/({\rm e}^{x}-1)}, \quad x>0$ \\
(d): \ $F(x)=1-{\rm e}^{-x}, \quad x>0$ \\
(e): \ $F(x)=1-{\rm e}^{-x^2}, \quad x>0$ \\
(f): \ $F(x)=1-{\rm e}^{-x/2}\left(1+ {x}/{2}+ {x^2}/{8}\right), \quad x>0$ \\
\hline
\end{tabular}
\end{table}
We conclude the paper with few illustrative examples which shed some light on the behavior of the sequences of random variables  defined in (\ref{sequence}).
\begin{example}\rm
Let $X_1$ be uniformly distributed on $[0,2]$. Then we have $T_1(t)=-\log t/2$,  $\widetilde{\mu}_1(t)=t/2$ 
and $\mathcal{CE}(X_1;t)=t/4$, $0<t<2$ (see \cite{Di-Longobardi-2009}), so that
$$
\mathbb E\left (\frac{T_1(X_1)}{\tau(X_1)}\,\Big|\,X_1\leq t\right)=\mathcal{CE}(X_1;t)+\tilde{\mu}_1(t)T_1(t)=\frac{t}{4}-\frac{t}{2}\log \frac{t}{2},
$$
and
$$
{\rm cov}(X_1,T_1(X_1)\,|\,X_1\leq t)
 =T_1(t)[\mu_1(t)-\mathbb E(X_1)]-\mathcal{CE}(X_1;t)=\frac{2-t}{2}\log \frac{t}{2}-\frac{t}{4}.
$$
From (\ref{cov:Xn:Xn+1}), we obtain
$$
{\rm cov}(X_1,X_2)={\rm cov}(X_1,X_1-\tilde{\mu}_1(X_1))=\frac{1}{2}{\rm var}(X_1)=\frac{1}{6}.
$$
Finally, from (\ref{colo:T:operator}), we obtain
$$
\widetilde{\textbf{T}}_0F_{2}(t)=F_1(t)[\widetilde{\mu}_1(t)(1+T_1(t))+\mathcal{CE}(X_1;t)] 
=\frac{t^2}{4}\left[\frac{3}{2}-\log\frac{t}{2}\right] 
\qquad \hbox{for $0<t<2$.}
$$
\end{example} 
\par
It is difficult to obtain neat analytical results for the sequence of random variables $\{X_n,n\geq1\}$ and, therefore, we are forced to proceed via numerical computations. To this aim, in Figure 1 we show plots of the cumulative distribution of the random variables $X_1,X_2,\ldots,X_5$ for different starting distribution functions that are listed in Table 1. In the figure, the solid line corresponds to the CDF of  $X_1$, the large-dashed line corresponds to the CDF of  $X_2$ and so on. Moreover, we compute numerically the mean of the recursive random variables as well as the corresponding cumulative entropy; see Tables 2 and 3, respectively.  Recalling (\ref{E:X:n+1}),  the cumulative entropy is computed as the difference of two consecutive means. As expected,  the mean  of $X_n$ decreases when $n$ increases, whereas  the cumulative entropy decreases when $n$ increases. According to the numerical findings, we expect that the cumulative entropies are decreasing for any given starting distribution function.
\begin{table}\label{table:gsp}
\centering
\caption{The mean of the starting distribution given in Table 1.}
\begin{tabular}{c|cccccc}\hline
{} & (a) & (b) & (c) & (d) & (e) & (f)\\ 
\hline
$\mathbb E(X_1)$ & 1.000\,00 & 1.772\,45 &1.937\,91 & 1.000\,00 & 0.886\,22& 6.000\,00 \\
$\mathbb E(X_2)$ & 0.500\,00 & 0.886\,62 &1.151\,66 & 0.355\,06 & 0.506\,59& 3.359\,35 \\
$\mathbb E(X_3)$ & 0.180\,66 & 0.606\,80 &0.739\,45 & 0.104\,92 & 0.260\,35& 1.820\,59 \\
$\mathbb E(X_4)$ & 0.047\,13 & 0.469\,75 &0.511\,03 & 0.024\,83 & 0.118\,26& 0.944\,89 \\
$\mathbb E(X_5)$ & 0.009\,06 & 0.387\,74 &0.375\,44 & 0.004\,59 & 0.047\,17& 0.466\,17 \\
\hline
\end{tabular}
\end{table}
\begin{table}
\centering
\caption{The cumulative entropy of the starting distribution given in Table 1.}
\begin{tabular}{c|cccccc}\hline
{} & (a) & (b) & (c) & (d) & (e) & (f)\\ \hline
$\mathcal{CE}(X_1)$ & 0.500\,00 & 0.886\,23 &0.786\,25 & 0.644\,94 & 0.379\,63& 2.640\,65 \\
$\mathcal{CE}(X_2)$ & 0.319\,34 & 0.279\,35 &0.412\,21 & 0.250\,14 & 0.246\,24& 1.538\,76 \\
$\mathcal{CE}(X_3)$ & 0.133\,53 & 0.137\,12 &0.228\,42 & 0.080\,09 & 0.142\,09& 0.875\,70 \\
$\mathcal{CE}(X_4)$ & 0.038\,07 & 0.082\,01 &0.135\,59 & 0.020\,24 & 0.071\,09& 0.475\,72 \\
\hline
\end{tabular}
\end{table}
\section*{Acknowledgements}
We wish to thank and acknowledge partial support from the Ordered and Spatial Data Center of Excellence of Ferdowsi University of Mashhad of Iran, from GNCS-INdAM, and Regione Campania. Abdolsaeed Toomaj is grateful to the Department of Mathematics, Salerno University, for the hospitality during a four month visit in 2014.
\par
We gratefully thank the anonymous referee for his/her careful reading of the paper and for valuable suggestions. 

\begin{thebibliography}{99}
\footnotesize
%
\bibitem{AbWh96}
{\sc  Abate, J. and Whitt, W.} (1996).
An operational calculus for probability distributions via Laplace transforms. 
{\em Adv. Appl. Prob.} {\bf 28,} 75--113.
%
\bibitem{Bart09}
{\sc Bartoszewicz, J.} (2009).
On a representation of weighted distributions. 
{\em Stat. Prob. Lett.} {\bf 79,} 1690--1694. 
%
\bibitem{Baxter} 
{\sc Baxter, L.~A.} (1982).
Reliability applications of the relevation transform. 
{\em Nav. Res. Logist. Quart.}  {\bf 29,}  323--330.
%
\bibitem{Block} 
{\sc Block, H.~W., Savits, T.~H. and Singh, H.} (1998).
The reversed hazard rate function.
{\em Probab. Eng. Inform. Sci.} {\bf 12,} 69--90.
%
\bibitem{Bloom} 
{\sc Bloom, S.} (1997).
First and second order Opial inequalities. 
{\em Studia Math.} {\bf 126,}  27--50. 
%
\bibitem{Burkschat-Navarro} 
{\sc Burkschat, M. and Navarro, J.} (2014). 
Asymptotic behavior of the hazard rate in systems based on sequential order statistics. 
{\em Metrika}, {\bf 77,}  965--994. 
%
\bibitem{Cai} 
{\sc Cai, J., Feng, R. and Willmot, G.~E.} (2009). 
On the expectation of total discounted operating costs up to default and its applications. 
{\em Adv. Appl. Probab.} {\bf 41,}  495--522.
%
\bibitem {Dickson-Hipp} 
{\sc Dickson, D.~C.~M. and   Hipp, C.} (2001).
On the time to ruin for Erlang(2) risk process. 
{\em Insur. Math. Econ.} {\bf 29,} 333--344.
%
\bibitem{Di-Crescenzo-1999} 
{\sc Di Crescenzo, A.}  (1999). 
A probabilistic analogue of the mean value theorem and its applications to reliability theory. 
{\em J. Appl. Probab.} {\bf 36,} 706--719.
%
\bibitem{Di-Crescenzo-2000} 
{\sc Di Crescenzo, A.} (2000).
Some results on the proportional reversed hazards model. 
{\em Stat. Prob. Lett.} {\bf 50,} 313--321.
%
\bibitem{Di-Longobardi-2002} 
{\sc Di Crescenzo, A. and Longobardi, M.} (2002).
Entropy-based measure of uncertainty in past lifetime distributions. 
{\em J. Appl. Probab.} {\bf 39,} 434--440.
%
\bibitem{Di-Longobardi-2009} 
{\sc Di Crescenzo, A. and Longobardi, M.} (2009).
On cumulative entropies. 
{\em J. Stat. Plan. Infer.} {\bf 139,} 4072--4087.
%
\bibitem{DiCMaMu}
{\sc Di Crescenzo, A., Martinucci, B. and Mulero,  J.}  (2014). 
A quantile-based probabilistic mean value theorem. 
{\em Probab. Eng. Inform. Sc.} {\bf 30,} 261--280.    
%
\bibitem{DimChuGre1997} 
{\sc Dimitrov, B., Chukova, S. and Green, D., Jr.} (1997). 
Probability distributions in periodic random environment and their applications. 
{\em SIAM J. Appl. Math.} {\bf 57,} 501--517.
%
\bibitem{Gupta-1998} 
{\sc Gupta, R.~C., Gupta, P.~L. and Gupta, R.~D.}  (1998).
Modeling failure time data by Lehman alternatives. 
{\em Comm. Stat. Theor. Meth.} {\bf 27,} 887--904.
%
\bibitem{Gupta-1997} 
{\sc Gupta, R.~C., Kannan, N. and Raychaudhari, A.} (1997).
Analysis of lognormal survival data. 
{\em Math. Biosci.} {\bf 139,}  103--115.
%
\bibitem{Gupta-Nanda-2001} 
{\sc Gupta, R.~D. and Nanda, A.~K.}  (2001). 
Some results on (reversed) hazard rate ordering. 
{\em Comm. Stat. Theor. Meth.} {\bf 30,} 2447--2458.
%
\bibitem{Kapo-Psarr} 
{\sc Kapodistria, S. and Psarrakos, G.}  (2012).
Some extensions of the residual lifetime and its connection to the cumulative residual entropy. 
{\em Probab. Eng. Inform. Sci.} {\bf 26,} 129--146.
%
\bibitem{Kijima-Ohnishi-1999} 
{\sc Kijima, M. and Ohnishi, M.} (1999). 
Stochastic orders and their application in financial optimization. 
{\em Math. Meth. Oper. Res.} {\bf 50,} 351--372.
%
\bibitem{Krakowski} 
{\sc Krakowski, M.} (1973). 
The relevation transform and a generalization of the Gamma distribution function. 
{\em Rev. Fran\c{o}aise Automat. Informat. Recherche Op\'erat.} {\bf 7,} 107--120.
%
\bibitem{LauPRao1} 
{\sc Lau, K.-S. and Prakasa Rao, B.~L.~S.} (1990).
Characterization of the exponential distribution by the relevation transform. 
{\em J. Appl. Prob.} {\bf 27,} 726--729. 
%
\bibitem{LauPRao2}
{\sc Lau, K.-S. and Prakasa Rao, B.~L.~S.} (1992).
Characterization of the exponential distribution by the relevation transform. 
{\em J. Appl. Prob.} {\bf 29,} 1003--1004. 
%
\bibitem{Lehman} 
{\sc Lehman, E.~L.} (1953).
The power of rank tests. 
{\em Ann. Math. Statist.} {\bf 24,} 28--43.
%
\bibitem{Li-Garrido} 
{\sc Li, S. and Garrido, J.} (2004). 
On ruin for the Erlang$(n)$ risk process. 
{\em Insur. Math. Econ.} {\bf 34,}  391--408.
%
\bibitem{LiYuHu12}
{\sc Li, Y., Yu, L. and  Hu, T.} (2012).
Probability inequalities for weighted distributions. 
{\em J. Statist. Plann. Infer.} {\bf 142,} 1272--1278.
%
\bibitem{Mudholkar-Hotson} 
{\sc Mudholkar, G.~S. and Hutson, A.~D.} (1996). 
The exponentiated Weibull family: some properties and a flood data application. 
{\em Comm. Stat. Theor. Meth.} {\bf 25,}  3059--3083.
%
\bibitem{Mudholkar-1995} 
{\sc Mudholkar, G.~S., Srivastava, D. K. and Freimer, M.}  (1995).
The exponentiated Weibull family: a reanalysis of the bus-motor-failure data. 
{\em Technometrics} {\bf 37,} 436--445.
%
\bibitem{Muliere} 
{\sc Muliere, P., Parmigiani, G. and Polson, N.} (1993).
A note on the residual entropy function. 
{\em Probab. Eng. Inform. Sci.} {\bf 7,} 413--420.
%
\bibitem{NaSuLi11} 
{\sc Navarro, J., Sunoj, S.~M. and Linu, M.~N.} (2011). 
Characterizations of bivariate models using dynamic Kullback-Leibler discrimination measures. 
{\em Stat. Prob. Lett.} {\bf 81,} 1594--1598. 
%
\bibitem{Psarrakos} 
{\sc Psarrakos, G. and Navarro, J.} (2013). 
Generalized cumulative residual entropy and record values. 
{\em Metrika} {\bf 27,}  623--640.
%
\bibitem{Rezaei} 
{\sc Rezaei, M., Gholizadeh, B. and Izadkhah, S.} (2015)
On relative reversed hazard rate order. 
{\em Comm. Stat. Theor. Meth.} {\bf 44,}  300--308.
%
\bibitem{Shaked} 
{\sc  Shaked, M. and  Shanthikumar, J.~G.} (2007). 
{\em Stochastic Orders}.   Springer, New York.
%
\bibitem{Shannon} 
{\sc Shannon, C.~E.} (1948).
A mathematical theory of communication. 
{\em Bell. Syst. Tech. J.} {\bf 27,} 379--423, 623--656.
%
\end{thebibliography}
\end{document}